\documentclass{amsart}

\usepackage{amssymb}
\usepackage{amsmath}
\usepackage{amsfonts}

\usepackage{graphicx}
\usepackage{xy, color, hyperref, ytableau}
\usepackage{todo}
\xyoption{all}
\usepackage{listings,verbatim}
\lstdefinelanguage{Sage}[]{Python}
{morekeywords={False,sage,True},sensitive=true}
\lstset{
  frame=none,
  showtabs=False,
  showspaces=False,
  showstringspaces=False,
  commentstyle={\ttfamily\color{dgreencolor}},
  keywordstyle={\ttfamily\color{dbluecolor}\bfseries},
  stringstyle={\ttfamily\color{dgraycolor}\bfseries},
  language=Sage,
  basicstyle={\fontsize{9pt}{9pt}\ttfamily},
  aboveskip=0.3em,
  belowskip=0.1em,
}
\definecolor{dblackcolor}{rgb}{0.0,0.0,0.0}
\definecolor{dbluecolor}{rgb}{0.01,0.02,0.7}
\definecolor{dgreencolor}{rgb}{0.2,0.4,0.0}
\definecolor{dgraycolor}{rgb}{0.30,0.3,0.30}

\newcommand{\mb}{\mathbb}
\newcommand{\la}{\lambda}
\newcommand{\half}{\frac 12}
\newcommand{\C}{\mathbf C}
\newcommand{\core}[2]{\mathrm{core}_{#2}{#1}}

\DeclareMathOperator{\bin}{bin}

\newcommand{\eps}{\varepsilon}

\DeclareMathOperator{\sgn}{sgn}
\DeclareMathOperator{\ab}{ab}
\DeclareMathOperator{\Irr}{Irr}
\DeclareMathOperator{\Res}{Res}

\DeclareMathOperator{\ind}{ind}
\newtheorem{theorem}{Theorem}

\newtheorem{proposition}[theorem]{Proposition}

\theoremstyle{remark}
\newtheorem{remark}[theorem]{Remark}
\title[Macdonald trees and determinants of representations]
{Macdonald trees and determinants of representations for finite Coxeter groups}
\author{Arvind Ayyer}
\address{Arvind Ayyer, Department of Mathematics, Indian Institute of Science, Bengaluru 560012, India.}
\email{arvind@iisc.ac.in}

\author{Amritanshu Prasad}
\address{Amritanshu Prasad, The Institute of Mathematical Sciences (HBNI), CIT campus, Taramani, Chennai 600113, India.}
\email{amri@imsc.res.in}

\author{Steven Spallone}
\address{Steven Spallone, Indian Institute of Science Education and Research, Pashan, Pune 411008, India.}
\email{sspallone@iiserpune.ac.in}
\date{\today}

\begin{document}
\begin{abstract}
  Every irreducible odd dimensional representation of the $n$'th symmetric or hyperoctahedral group, when restricted to the $(n-1)$'th, has a unique irreducible odd-dimensional constituent.
  Furthermore, the subgraph induced by odd-dimensional representations in the Bratteli diagram of symmetric and hyperoctahedral groups is a binary tree with a simple recursive description.
  We survey the description of this tree, known as the Macdonald tree, for symmetric groups, from our earlier work.
  We describe analogous results for hyperoctahedral groups.

  A partition $\lambda$ of $n$ is said to be {\em chiral} if the corresponding irreducible representation $V_\lambda$ of $S_n$ has non-trivial determinant. 
  We review our previous results on the structure and enumeration of chiral partitions, and subsequent extension to all Coxeter groups by Ghosh and Spallone.
  Finally we show that the numbers of odd and chiral partitions track each other closely.
\end{abstract}
\keywords{odd partitions, chiral partitions, core towers, sign character, Coxeter groups}
\subjclass[2010]{05E10, 20C30, 05A17, 05A15}
\maketitle
\section{Summary of results}
\label{sec:introduction}
This article is mainly a survey of the main results of three papers \cite{ayyer-prasad-spallone-odd-16} and \cite{APS-chiral} and \cite{Ghosh} concerning the application of the dyadic arithmetic of partitions to the representation theory of finite Coxeter groups.

Recall that Young's lattice is the set $\Lambda$ of integer partitions, partially ordered by containment of Young diagrams.
It has a unique minimal element $\emptyset$, the trivial partition of $0$.
Its Hasse diagram is known as Young's graph.
For each $\lambda\in \Lambda$, let $f_\lambda$ denote the number of saturated chains in $\Lambda$ from $\emptyset$ to  $\lambda$.
This number is also the number of standard tableaux of shape $\lambda$, and equals the dimension of the irreducible representation of the symmetric group associated with $\lambda$.
The numbers $f_\lambda$ can be computed by the hook-length formula of Frame, Robinson and Thrall \cite[Theorem~1]{frame1954hook}.

We say that a partition $\lambda$ is {\em odd} if $f_\lambda$ is odd.
In Ref.~\cite{ayyer-prasad-spallone-odd-16} it is shown that the subgraph induced by the subset of odd partitions in Young's graph forms an incomplete binary tree rooted at $\emptyset$. See Figure~\ref{fig:odd_in_yng} for an illustration of the first few rows of this tree.
Moreover, this binary tree has a simple recursive structure that gives a structural explanation for a formula independently due to McKay~\cite{mckay-1972} and Macdonald~\cite{macdonald1971degrees} (stated as Theorem~\ref{thm:macdonald} here) for the number of odd partitions of $n$. 
Macdonald generalized this formula to other Coxeter groups in Ref.~\cite{macdonald1973cox}. In Section~\ref{sec:odd-partitions}, we begin by explaining these results for the symmetric group. We then present our new results for the hyperoctahedral groups, which explain Macdonald's formulas.

Our results for the symmetric group have been used by Bessenrodt, Giannelli, Kleshchev, Miller, Navarro, Olsson and Tiep to propose bijective McKay correspondences for symmetric groups and study their representation-theoretic properties \cite{MR3692973,MR3687936,gagp,GM2018,MR3621673}.

The irreducible complex representations of $S_n$ are parametrized by partitions of $n$. Let $\lambda$ be a partition and $(\rho_\lambda, V_\lambda)$ be the associated irreducible representation of $S_n$.
Let $\det:GL_\C(V_\lambda)\to \C^*$ denote the determinant function.
The composition $\det\circ\rho_\lambda :S_n\to \C^*$, being a multiplicative character of $S_n$, is either the trivial character or the sign character.
We say that $\lambda$ is a \emph{chiral partition} if
$\det\circ\rho_\lambda$ is the sign character.
\begin{figure}[htbp!]
  \begin{center}
    \includegraphics[width=\textwidth]{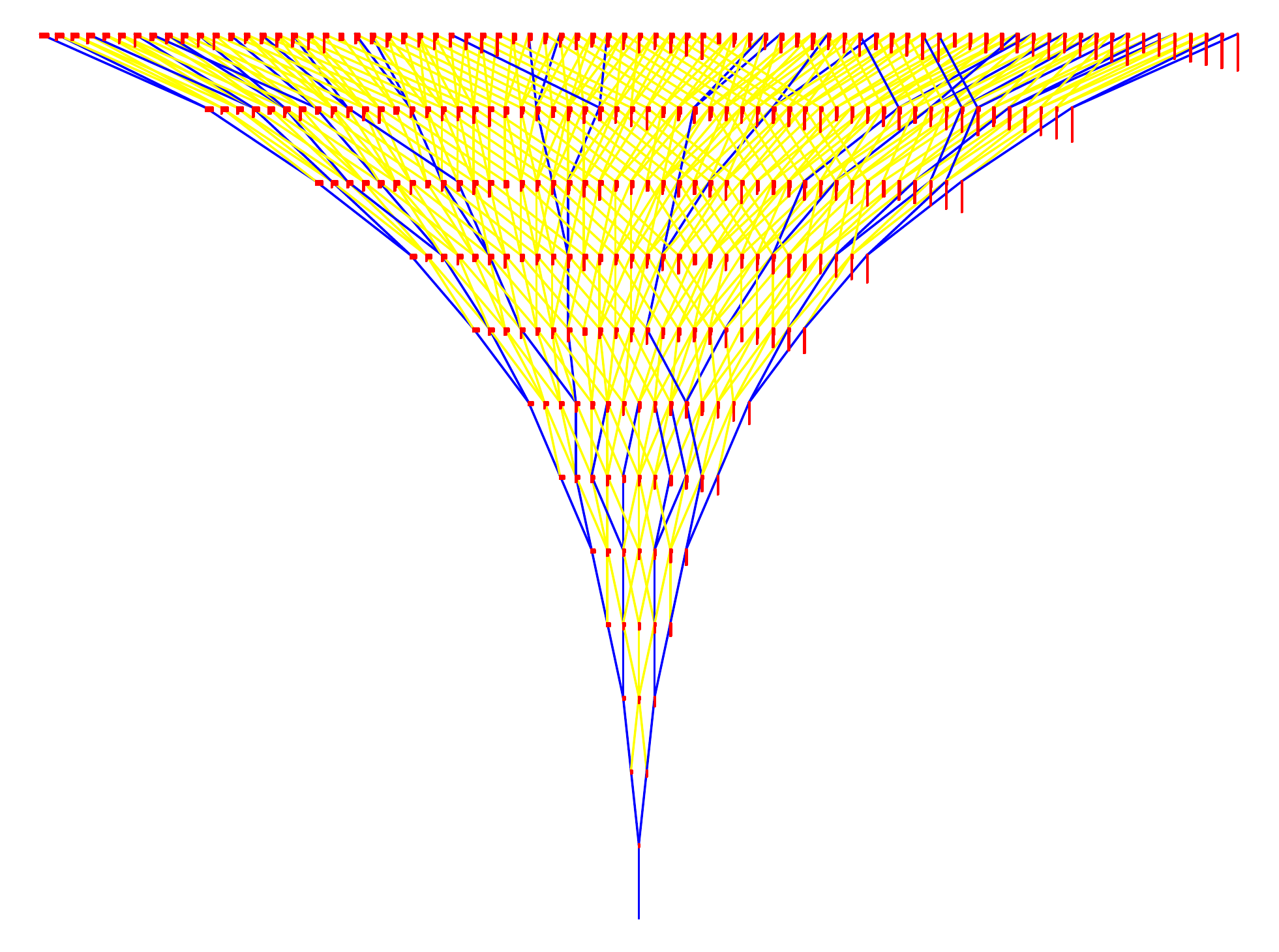}
    \caption{The first thirteen rows of the Macdonald tree (blue edges) in Young's graph (yellow edges).}
    \label{fig:odd_in_yng}
  \end{center}
\end{figure}
For each $\lambda\in \Lambda$, let $g_\lambda$ denote the number of saturated chains in $\Lambda$ from $(1,1)$ to $\lambda$, which is also
the number of standard tableaux $T$ of shape $\lambda$ such that $2$ occurs in the first column of $T$. Note that $g_{(n)} = 0$ for each $n$.
It turns out (see Section~\ref{sec:amrich-part}) that a partition $\lambda$ is chiral if and only if $g_\lambda$ is odd.
The most striking result of Ref.~\cite{APS-chiral} is a closed formula for the number of chiral partitions of $n$. This answers an open question posed in \cite[Exercise~7.55(b)]{ec2}.
In Section~\ref{sec:chiral-coxeter}, we review this result as well as analogues of chiral partitions for all Coxeter groups.
Finally, in Section~\ref{sec:comparison}, we show that the enumeration problem of odd and chiral partitions for the symmetric groups are closely related. 

We appeal to standard textbooks in representation theory (see, for example, \cite{2009JamesKerber}) for basic definitions involving partitions such as arm-length, leg-length and content.

\section{Macdonald trees}
\label{sec:odd-partitions}
Despite having nice combinatorial properties, the Bratteli diagrams of systems of Coxeter groups are objects of great complexity when viewed as directed graphs.
A powerful manifestation of this complexity is the difficulty in providing a closed formula for the number of partitions of $n$.
In this section, we shall examine the subgraphs induced in Bratteli diagrams of Coxeter groups by odd-dimensional representations.
For symmetric and hyperoctahedral groups, these subgraphs turn out to be rooted binary trees (where every node has up to two branches) whose structure can be captured in a few succinct axioms (Theorems~\ref{theorem:recursive} and~\ref{theorem:recursive-hypermac}).
We call these trees \emph{Macdonald trees} after Ian G. Macdonald, whose techniques involving cores and quotients we used. 
The structure of these trees explains the enumerative results of Macdonald~\cite{macdonald1971degrees} and McKay~\cite{mckay-1972}.
At the time we found our results, we only knew about the work of Macdonald. It was later brought to our attention that McKay had published the same results a year before.

We begin with an illustrative toy case in Section~\ref{sec:baby-case:-pascals}. We then recall our results for the symmetric group in Section~\ref{sec:odd-sym}. Our new results for the hyperoctahedral group are explained in Section~\ref{sec:odd-part-hyper}. We show that the subgraph of odd-dimensional representations for the  demihyperoctahedral group is not a tree in Section~\ref{sec:odd-demi}.

\subsection{Toy case: Pascal's triangle}
\label{sec:baby-case:-pascals}
To better appreciate the recursive structure of Macdonald trees it is instructive to begin with the much simpler example of Pascal's triangle.
Consider the graph $P$ whose nodes are pairs $(a,b)$, where $a$ and $b$ are non-negative integers.
Pairs $(a,b)$ and $(c,d)$ are connected by an edge if either $a=c$ and $|b-d|=1$ or $|a-c|=1$ and $b=d$.
This graph is the Hasse diagram of the poset $\mathbf N\times \mathbf N$, where $\mathbf N$ denotes the non-negative integers with the usual linear order.

It is easy to see that the number of number of saturated chains in $\mathbf N\times \mathbf N$ from $(0,0)$ to $(a,b)$ is the binomial coefficient $C(a,b)=\frac{(a+b)!}{a!b!}$.
Call a pair $(a,b)\in P$ an \emph{odd pair} if $C(a,b)$ is odd.
Call it a \emph{one-dimensional pair} if $C(a,b)=1$ (i.e., $a=0$ or $b=0$).
It is a now well-known discovery of Lucas (see \cite[Chapter~17]{stewart2006cut}) that the subset of $P$ consisting of odd pairs is a combinatorial version of the Sierpi\`nski gasket.
We revisit this connection from the point of view of graphs.
Let $L$ denote the subgraph induced in $P$ by odd pairs.
Let $L_k$ denote the subgraph of $L$ consisting of its first $2^k$ rows (namely odd pairs $(a,b)$ with $0\leq a+b<2^k$).
Then $L_1$ is the binary tree \includegraphics[width=0.5cm]{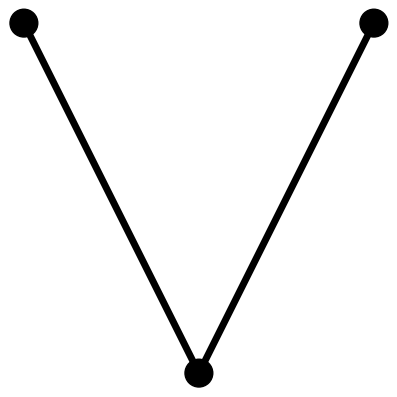}.
All the nodes in $L_1$ are one-dimensional.
Moreover,
\begin{itemize}
\item For each $k\geq 1$, $L_{k+1}$ can be obtained from $L_k$ as follows: to each one-dimensional node in the top row of $L_k$, add an edge and attach one copy of $L_k$.
\item One of the one-dimensional nodes in each copy of $L_k$ so attached becomes a one-dimensional node in the top row of $L_{k+1}$.
\end{itemize}
\begin{figure}
  \begin{center}
    \includegraphics[width=0.7\textwidth]{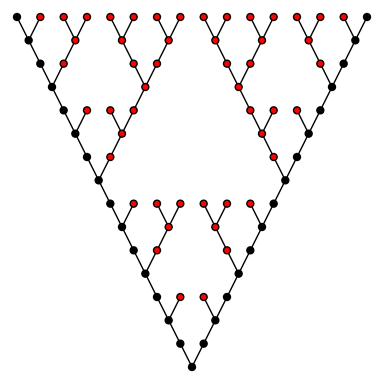}
  \end{center}
  \caption{The binary tree $L_4$. One-dimensional nodes are black.}
  \label{fig:pascal}
\end{figure}
Let $\ell(n)$ be the number of odd pairs summing to $n$ and $\nu(n)$ be the number of $1$'s in the binary expansion of $n$.
The two axioms suffice to construct $L$ by recursion (see Fig.~\ref{fig:pascal}), and imply that, if $2^k\leq n<2^{k+1}$, then $\ell(n)=2 \ell(n-2^k)$.
It then follows that $\ell(n) = 2^{\nu(n)}$.
Our results on Macdonald trees of symmetric and hyperoctahedral groups are more subtle analogs of this phenomenon.

\subsection{Odd partitions of symmetric groups}
\label{sec:odd-sym}
We will need some definitions. A partition $\lambda \vdash n$ is said to be a {\em hook} if $\lambda = (k,1^{n-k})$ for some $k$, $1 \leq k \leq n$. In the special case when $k$ equals $1$ or $n$, we say that the partition is {\em one-dimensional}.

Let $a(n)$ be the number of odd partitions of $n$.
Let $n$ be a positive integer with binary expansion given by
\begin{equation}    \label{eq:1}
  n = \epsilon + 2^{k_1} + 2^{k_2} + \dotsb + 2^{k_r},\,
  \epsilon \in \{0, 1\},\; 0<k_1<k_2<\dotsb <k_r.
\end{equation}
Then we have the following result:
\begin{theorem}[{\cite{macdonald1971degrees,mckay-1972}}]
  \label{thm:macdonald}
  If $n$ is an integer having binary expansion given in \eqref{eq:1},
  then
  \[
    a(n) = 2^{k_1 + \cdots + k_r}.
  \]
\end{theorem}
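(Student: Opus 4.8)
The plan is to reduce the oddness of $\lambda$ to a transparent condition on its $2$-core tower, via an exact formula for the $2$-adic valuation $v_2(f_\lambda)$, and then to count. Recall that $\lambda \mapsto (\corem_2\lambda, \quo_2\lambda)$, sending a partition to its $2$-core and its $2$-quotient $(\lambda^{(0)},\lambda^{(1)})$, is a bijection; that the $2$-cores are exactly the staircases $(m,m-1,\dots,1)$; and that iterating this assigns a staircase to every node of the infinite binary tree, only finitely many of them nonempty. This sets up a bijection between partitions and finitely supported \emph{core towers}. Writing $b_i$ for the sum of the sizes of the $2^i$ cores sitting at level $i$, an easy induction on the bijection gives the size bookkeeping $|\lambda| = \sum_{i\ge 0} 2^i b_i$.

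The first and main step is a valuation formula. Here I would use the classical fact that the even-length hooks of $\lambda$ are in bijection with all the hooks of $\lambda^{(0)}$ and $\lambda^{(1)}$, a quotient hook of length $\ell$ corresponding to a hook of length $2\ell$ in $\lambda$, while the core contributes only odd hooks. Feeding this into the Frame--Robinson--Thrall hook-length formula $\prod_h h = |\mu|!/f_\mu$ and Legendre's formula $v_2(m!) = m - s_2(m)$, where $s_2$ denotes the binary digit sum, and using $|\corem_2\lambda| = |\lambda| - 2(|\lambda^{(0)}| + |\lambda^{(1)}|)$ to collect the linear terms, yields the one-step recursion
\[
  v_2(f_\lambda) = |\corem_2\lambda| - s_2(|\lambda|) + s_2(|\lambda^{(0)}|) + s_2(|\lambda^{(1)}|) + v_2(f_{\lambda^{(0)}}) + v_2(f_{\lambda^{(1)}}).
\]
Unrolling this over the whole core tree, the $s_2$-terms telescope: each non-root size occurs once positively (as a child) and once negatively (as itself), so only $-s_2(|\lambda|)$ survives, leaving
\[
  v_2(f_\lambda) = \Big(\sum_{i\ge 0} b_i\Big) - s_2(|\lambda|).
\]
I expect this telescoping identity to be the crux of the whole argument; once it is in hand, the rest is elementary.

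Finally I would count. For a fixed $n$, any representation $n = \sum_i 2^i b_i$ with $b_i \ge 0$ satisfies $\sum_i b_i \ge s_2(n)$, with equality attained only by the binary expansion, since any $b_i \ge 2$ can be strictly reduced by carrying ($b_i \mapsto b_i - 2$, $b_{i+1}\mapsto b_{i+1}+1$). Hence the formula shows that $\lambda$ is odd, i.e.\ $v_2(f_\lambda)=0$, precisely when $b_i$ equals the $i$-th binary digit of $n$ for every $i$. Thus at each level $i$ whose digit is $1$ exactly one of the $2^i$ nodes carries a single box and the rest are empty, while levels with digit $0$ are entirely empty; the choices are independent across levels. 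The number of such core towers, and hence $a(n)$, is therefore
\[
  \prod_{i\,:\,\mathrm{digit}_i(n)=1} 2^i = 2^{\sum_{i\,:\,\mathrm{digit}_i(n)=1} i}.
\]
For $n = \epsilon + 2^{k_1} + \dots + 2^{k_r}$ the positions of the $1$'s are $0$ (present iff $\epsilon=1$) together with $k_1,\dots,k_r$; as position $0$ contributes nothing to the exponent, this gives $a(n) = 2^{k_1 + \cdots + k_r}$, as claimed.
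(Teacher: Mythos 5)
Your argument is correct. It is the classical core-tower proof: the telescoped valuation formula you derive is exactly Theorem~\ref{theorem:oddness} (that is, $v_2(f_\lambda)=e_2(\lambda)=w(\lambda)-\nu(|\lambda|)$), and your characterization of odd partitions as those whose $2$-core tower has row sums equal to the binary digits of $n$ is precisely the consequence the paper draws from that theorem. The route the paper features for this particular count is organized differently: rather than computing the full $2$-adic valuation in one pass over the whole tower, it peels off one binary digit at a time via Theorem~\ref{theorem:odd_rec} --- for $2^k\le n<2^{k+1}$, a partition of $n$ is odd if and only if it has a $2^k$-hook whose removal leaves an odd partition of $n-2^k$, and the odd partition is determined by that core together with the leg-length of the removed hook, which takes $2^k$ values; this yields the recursion $a(n)=2^k\,a(n-2^k)$ of Theorem~\ref{thm:macdonald-var} and hence the closed formula. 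Your approach buys the exact valuation $v_2(f_\lambda)$ for every partition (the input needed later for the refined chiral counts in Theorem~\ref{theorem:refined}), at the cost of invoking the full core-tower bijection and the carrying argument for the minimality of $\sum_i b_i$; the paper's single-hook recursion is what additionally exposes the cover relations among odd partitions, and hence the binary-tree structure of $M$, which a static count of core towers does not see. One cosmetic point: your aside that ``the core contributes only odd hooks'' is not needed (and is not the right way to relate the hooks of $\lambda$ to those of its core); the computation only uses the bijection between even hooks of $\lambda$ and doubled hooks of the quotient, which you state correctly.
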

In other words, if we express $n$ in binary as a sum of powers of $2$, then $a(n)$ is the {\it product} of those powers of $2$.  A recursive version of Theorem~\ref{thm:macdonald} is:
\begin{theorem}
  \label{thm:macdonald-var}
  We have $a(0)=1$, and moreover,
  for any positive integer $n$, let $k$ be such that $2^k\leq n < 2^{k+1}$.
  Then
  \begin{displaymath}
    a(n) = 2^k a(n-2^k).
  \end{displaymath}
\end{theorem}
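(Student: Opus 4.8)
The plan is to obtain Theorem~\ref{thm:macdonald-var} directly from the closed formula of Theorem~\ref{thm:macdonald}, in exactly the spirit of the derivation of the recursion $\ell(n) = 2\ell(n-2^k)$ in the Pascal triangle toy case. The content of the statement is precisely the equivalence of the recursive and closed forms, so the strategy is to read off the recursion from the binary expansion \eqref{eq:1}. First I would dispose of the base case: the only partition of $0$ is the empty partition, whose single (empty) chain gives $f_\emptyset = 1$; this is odd, so $a(0) = 1$.

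For the recursive step, fix $n \ge 1$ and let $k$ be the unique integer with $2^k \le n < 2^{k+1}$; this $k$ is nothing but the exponent of the leading power of two in the binary expansion of $n$. Suppose first that $n \ge 2$, so that in \eqref{eq:1} we have $r \ge 1$ and $k = k_r \ge 1$, with $\epsilon = 0$ when $n$ is even and $\epsilon = 1$ when $n$ is odd. Subtracting $2^k = 2^{k_r}$ deletes only the leading term, so $n - 2^k$ has binary expansion $\epsilon + 2^{k_1} + \cdots + 2^{k_{r-1}}$, with the same $\epsilon$ and the remaining exponents unchanged. Applying Theorem~\ref{thm:macdonald} to both $n$ and $n - 2^k$ then gives
\[
  a(n) = 2^{k_1 + \cdots + k_r}
       = 2^{k_r} \cdot 2^{k_1 + \cdots + k_{r-1}}
       = 2^k \, a(n - 2^k),
\]
which is the asserted recursion. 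The boundary subcase $n = 2^k$ is already subsumed here, since then $r = 1$ and the trailing exponent sum is empty, giving $a(n - 2^k) = 2^0 = a(0)$. The one genuinely separate case is $n = 1$: here $k = 0$, and the recursion reads $a(1) = 2^0 a(0) = 1$, in agreement with the empty exponent sum in Theorem~\ref{thm:macdonald}.

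Since the underlying arithmetic is a one-line manipulation of binary expansions, there is essentially no obstacle in this direction; the only points requiring care are identifying the $k$ singled out by $2^k \le n < 2^{k+1}$ with the leading exponent $k_r$, and checking the degenerate cases $n = 2^k$ and $n = 1$ against $a(0) = 1$. I would add, in parallel with the Pascal discussion, that the logical roles can be reversed: one may treat Theorem~\ref{thm:macdonald-var} as the primary statement, proved independently from the recursive structure of the Macdonald tree, and then solve the recursion together with $a(0) = 1$ to recover the closed form $a(n) = 2^{k_1 + \cdots + k_r}$, exactly as $\ell(n) = 2^{\nu(n)}$ was obtained from the two tree axioms in the toy case.
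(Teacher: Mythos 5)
Your derivation is correct and matches how the paper treats this statement: Theorem~\ref{thm:macdonald-var} is presented simply as ``a recursive version'' of the closed formula in Theorem~\ref{thm:macdonald}, obtained by stripping the leading term $2^{k_r}$ from the binary expansion \eqref{eq:1}, exactly as you do, with the base cases $n=1$ and $n=2^k$ checked against $a(0)=1$. Your closing remark about reversing the logical direction is also consistent with the paper, which regards the tree-structure result (Theorem~\ref{theorem:recursive}) as the independent structural explanation of the recursion.
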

Our first result is an interpretation of Theorem~\ref{thm:macdonald-var} in terms of Young's lattice.
Let $M$ denote the subgraph induced in Young's graph by the set of odd partitions.
The first thirteen rows of $M$ inside Young's graph are shown in Figure~\ref{fig:odd_in_yng}.
Let $M_k$ denote the first $2^k$ rows of $M$ (consisting of partitions of $0$ through $2^k-1$).
\begin{center}
  \begin{figure}[htbp!]
    \centering
    \includegraphics[width=\textwidth]{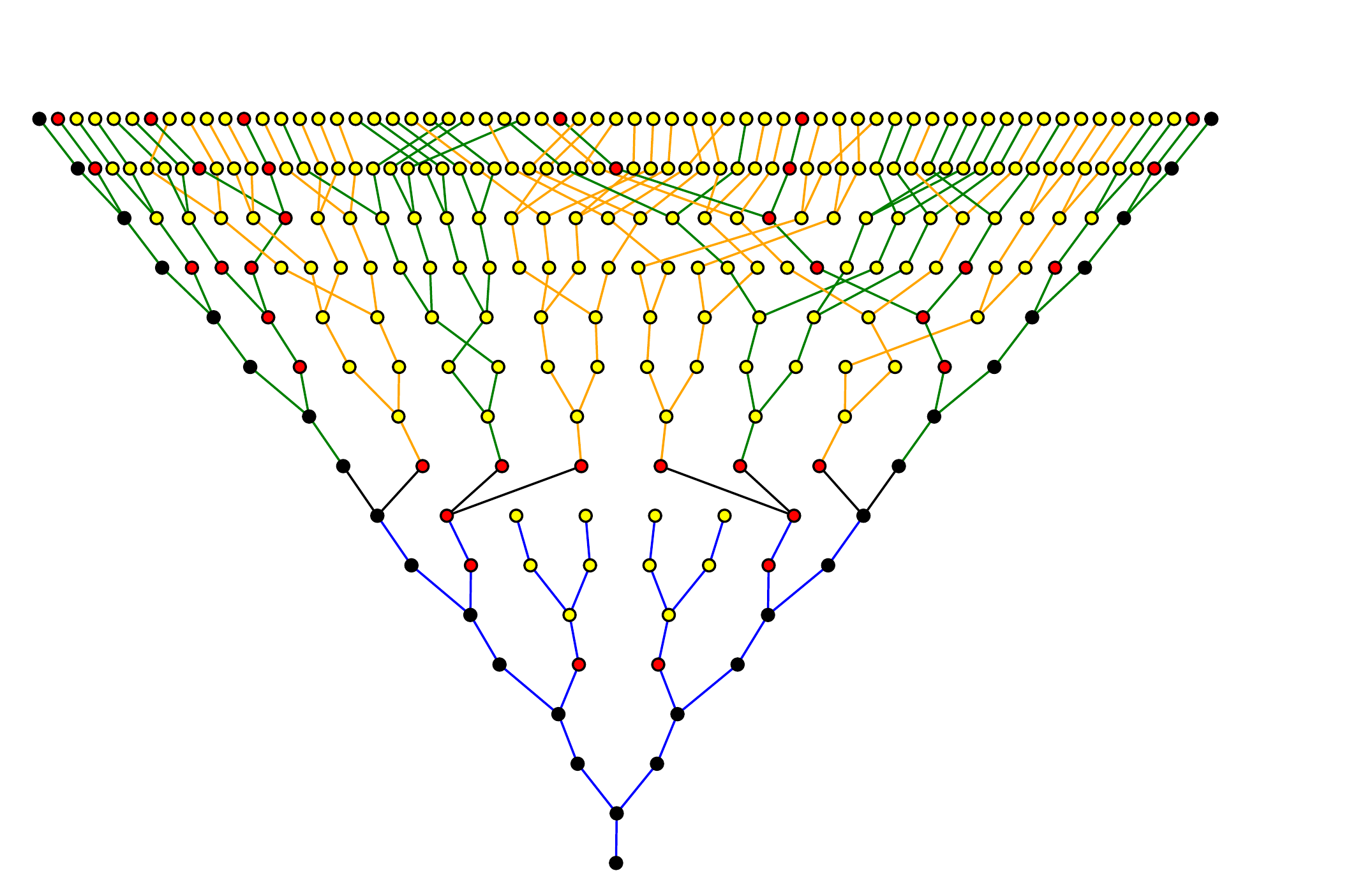}
    \caption{The recursive construction of $M_4$ from $M_3$. $M_3$ is shown in blue. Two copies of $M_3$ (shown in green and orange) are attached to each of the four odd-dimensional hooks of rank seven. The black nodes represent one-dimensional partitions, the red nodes represent higher-dimensional hooks, and the remaining nodes are yellow.}
    \label{fig:recursive}
  \end{figure}
\end{center}
\begin{theorem}
  [Recursive description of $M$]
  \label{theorem:recursive}
  $M_2$ is the rooted tree
  \begin{displaymath}
    \includegraphics[width=1cm]{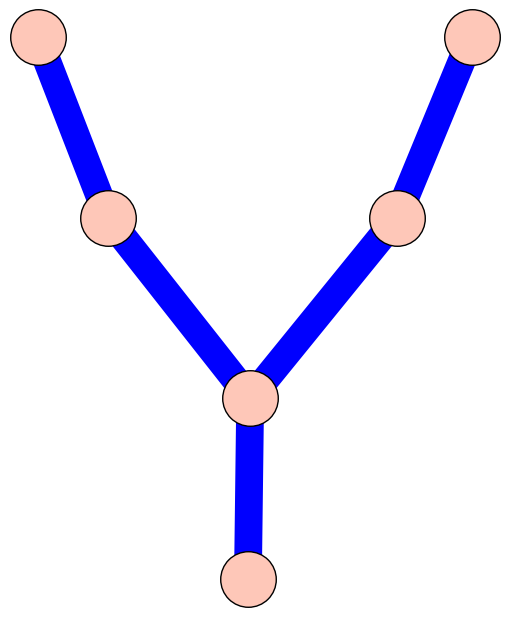}.
  \end{displaymath}
  For each $k\geq 2$, we have:
  \begin{enumerate}
  \item The graph $M_{k+1}$ is obtained from $M_k$ by attaching two copies of
    $M_k$ to each hook in the top row of $M_k$ via an edge. 
  \item The hooks in the top row of $M_{k+1}$ are the hooks in $M_k$ that ascend from the one-dimensional nodes in the top row of $M_k$.
  \item The one-dimensional nodes in the top row of $M_{k+1}$ are one-dimensional nodes in one of the two copies of $M_k$ that
    ascend from the one-dimensional nodes in the top row of $M_k$.
  \end{enumerate}
\end{theorem}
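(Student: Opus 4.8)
The plan is to prove the three parts simultaneously by induction on $k$, with the arithmetic of $2$-core towers as the main engine. First I would recall from \cite{ayyer-prasad-spallone-odd-16} the characterization of odd partitions: $\lambda\vdash n$ is odd if and only if its $2$-core tower carries a single cell at each level $i$ for which the $i$-th binary digit of $n$ equals $1$, and is empty at every other node. Since the $2^i$ nodes at level $i$ offer $2^i$ placements for that lone cell, and the tower is a complete invariant of $\lambda$, this immediately recovers the product formula $a(n)=2^{k_1+\cdots+k_r}$ of Theorem~\ref{thm:macdonald} and, more importantly, attaches to each odd partition a concrete label that I can manipulate. The base case $M_2$ is then checked by listing the odd partitions of $0,1,2,3$.

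Next I would pass to the $2$-abacus model, in which a single-box cover relation $\mu\lessdot\lambda$ of Young's graph becomes the downward slide of one bead by one step on one of the two runners; this simultaneously exposes the cover relation and the change in the tower. Two facts are extracted for the inductive step. The top row of $M_k$ consists of the odd partitions of $2^k-1=(1\cdots1)_2$, whose towers carry one cell at every level $0,\dots,k-1$; among these the \emph{hooks} are singled out as the configurations from which exactly two odd partitions of rank $2^k$ are reached, by extending the arm of $(a,1^b)$ to $(a+1,1^b)$ or its leg to $(a,1^{b+1})$, while no odd partition of rank $2^k$ lies above any non-hook odd partition of $2^k-1$. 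That both extensions are odd is immediate from $\binom{2^k-1}{j}$ being odd for every $j$ by Lucas' theorem, and that there are no further odd ascents follows by comparing with the count $a(2^k)=2^k$. This dichotomy is precisely what produces the two copies of $M_k$ attached to each hook in part~(1), the attaching edge being the arm- or leg-extension.

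It then remains to identify the attached subgraphs and their top rows. For part~(1) I would build a rank- and edge-preserving isomorphism from the subgraph of $M_{k+1}$ lying above a chosen extension of a fixed hook onto $M_k$, by recording the newly occupied node at level $k$ of the tower (one of $2^k$ positions, encoding both the hook and the choice of copy) and checking that the remaining lower levels reconstruct, compatibly with single-box covers, an odd partition of $n-2^k$. Parts~(2) and~(3) follow by tracking one-dimensional partitions through this correspondence: arm-extending $(m)$ keeps a one-dimensional node while leg-extending it yields a genuine higher-dimensional hook, so each of the two one-dimensional nodes $(2^{k+1}-1)$ and $(1^{2^{k+1}-1})$ in the top row of $M_{k+1}$ ascends from a one-dimensional node of the top row of $M_k$ along a single copy, and the hooks of that top row are exactly the nodes reached along these same spines. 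The main obstacle I anticipate is verifying that the lower-level tower data transforms compatibly with the one-box covering relations, so that each attached piece is a true copy of $M_k$ rather than merely a row-by-row equinumerous graph; I would discharge this using the inductive hypothesis together with the already-established fact that $M$ is a tree, namely that each odd $\lambda\vdash n$ has a unique odd partition of rank $n-1$ beneath it.
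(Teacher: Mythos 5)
Your overall architecture matches the paper's: the paper derives Theorem~\ref{theorem:recursive} from Theorem~\ref{theorem:odd_rec} (Lemma~2 of \cite{ayyer-prasad-spallone-odd-16}), whose first two clauses are exactly your $2$-core-tower bijection $\lambda \leftrightarrow (\text{position of the row-}k\text{ cell},\ \text{odd partition of } n-2^k)$ --- the row-$k$ position is equivalent to the leg-length of the $2^k$-hook, and truncating the tower is the same as passing to the $2^k$-core. Your treatment of the attaching edges at rank $2^k$ via Lucas' theorem and the count $a(2^k)=2^k$ is sound, as is the bookkeeping for parts (2) and (3) via the hook/one-dimensional spines.

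The gap is in the step you yourself flag as the main obstacle. The claim that each attached piece is a genuine copy of $M_k$ --- that for $2^k<n<2^{k+1}$ an odd $\mu\vdash n-1$ is covered by an odd $\lambda\vdash n$ if and only if their row-$k$ tower cells occupy the same position and their truncated towers are in a cover relation --- is precisely the final clause of Theorem~\ref{theorem:odd_rec}, and it is the technical heart of the entire theorem. Your plan to discharge it from ``the already-established fact that $M$ is a tree'' is circular in this development: the uniqueness of the odd constituent under restriction is itself a consequence of this cover-compatibility (the paper presents the tree property as following from the recursive description, and in \cite{ayyer-prasad-spallone-odd-16} it follows from Lemma~2). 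Moreover, even granting the tree property as an independent input, it is too weak: combined with the inductive hypothesis it shows that the set of cover pairs in $M$ at rank $n$ and the set of ``same position, covered cores'' pairs are equinumerous (both of size $a(n)$), but equinumerosity of two edge sets does not identify them. You still need one actual containment, i.e., an analysis on the abacus of how a single bead move interacts with the row-$k$ cell and the lower rows of the tower --- showing that removing a box from an odd $\lambda$ to reach an odd $\mu$ cannot disturb the row-$k$ cell and must induce a single-box removal on the $2^k$-core. That argument is the missing content and must be supplied directly rather than inferred from the tree property.
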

In particular, $M_{k+1}$ is obtained from $M_k$ by attaching copies of $M_k$ to certain leaves.
The entire Macdonald tree $M$ can be constructed by starting with $M_2$ and applying the recursive rules above.
The fact that $M$ is a tree means that an odd-dimensional irreducible representation of $S_n$, when restricted to $S_{n-1}$, contains exactly one odd-dimensional representation of $S_{n-1}$.

\begin{remark}
  The rules in Theorem~\ref{theorem:recursive} are consistent with the following observations:
  \begin{enumerate}
  \item The graph $M_k$ is a rooted binary tree with $2^{\binom k2}$ nodes in its top row.
  \item Of these nodes, $2^{k-1}$ are hooks.
  \item Two of these hooks are one-dimensional.
  \item The subgraph of hooks in $M$ is isomorphic to the subgraph $L$ of odd pairs.
  \end{enumerate}
\end{remark}
Figure~\ref{fig:recursive} shows how $M_4$ is constructed from $M_3$.
Theorem \ref{theorem:recursive} may be regarded as a structural explanation of Theorem~\ref{thm:macdonald-var}, similar to the explanation for the number of odd numbers in the $n$'th row of Pascal's triangle in Section~\ref{sec:baby-case:-pascals}.
The main ingredient in the proof of Theorem~\ref{theorem:recursive} is the following recursive description of odd partitions and their cover relations (see \cite[Lemma~2]{ayyer-prasad-spallone-odd-16}).

\begin{figure}[htbp!]
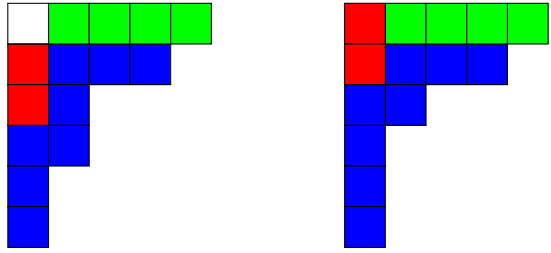

  \begin{center}
    \[
      \begin{array}{c c}
        \ydiagram{1}*[*(red)]{1+0,1,1}*[*(green)]{1+4}*[*(blue)]{1+0,1+3,1+1,2,1,1}
        & \qquad \qquad
          \ydiagram[*(red)]{1,1}*[*(green)]{1+4}*[*(blue)]{1+0,1+3,2,1,1,1}
      \end{array}
    \]
    \caption{The odd partition $\lambda=(5, 4, 2, 2, 1, 1)$ on the left and
      the unique odd partition that it covers, $\mu = (5, 4, 2, 1, 1, 1)$, on the right.
      Both can be reduced to their $2$-cores, $(1)$ and $\emptyset$ respectively, by successively removing unique rim-hooks of size $8$ (blue), $4$ (green) and $2$ (red) with the same leg-lengths.}
    \label{fig:odd-rec}
  \end{center}
\end{figure}
\begin{theorem}
  \label{theorem:odd_rec}
  Let $n$ be a positive integer, and let $k$ be the non-negative integer such that $2^k\leq n<2^{k+1}$.
  A partition $\lambda \vdash n$ is odd if and only if
  \begin{enumerate}
  \item $\lambda$ has a $2^k$-hook, and 
  \item the partition of $n-2^k$ obtained from $\lambda$ by removing this hook (the $2^k$-core of $\lambda$) is odd.
  \end{enumerate}    
  Moreover, the odd partition $\lambda$ is uniquely determined by its $2^k$-core and the leg-length of its $2^k$-hook.
  Finally, if $2^k<n<2^{k+1}$, and $\mu$ is an odd partition of $n-1$, then $\mu$ is covered by $\lambda$ if and only if the $2^k$-hooks of $\lambda$ and $\mu$ have the same leg-length and the $2^k$-core of $\mu$ is covered by the $2^k$-core of $\lambda$.
\end{theorem}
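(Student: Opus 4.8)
The plan is to reduce everything to a single $2$-adic valuation computation and then read off the combinatorics. Write $\nu_2$ for the $2$-adic valuation, so that $\lambda$ is odd exactly when $\nu_2(f_\lambda)=0$. By the hook-length formula $\nu_2(f_\lambda)=\nu_2(n!)-\sum_{c}\nu_2(h(c))$, where $c$ ranges over the cells of $\lambda$ and $h(c)$ is the hook length. I would first invoke the standard correspondence between the hooks of $\lambda$ whose length is divisible by $2$ and the cells of the $2$-quotient $(\lambda^0,\lambda^1)$: a cell of hook length $h$ in $\lambda^j$ comes from a hook of length $2h$ in $\lambda$, while the hooks of odd length contribute a factor that is a unit at $2$. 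Feeding this into the hook-length formula and using $\nu_2(m!)=m-s_2(m)$, with $s_2$ the binary digit sum, the factorials cancel and one obtains a one-step recursion for $\nu_2(f_\lambda)$ in terms of $\nu_2(f_{\lambda^0})$ and $\nu_2(f_{\lambda^1})$ and the size of the $2$-core.

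Iterating this recursion down the $2$-core tower of $\lambda$ (level $i$ being the $2^i$ cores produced by the $i$-fold iterated $2$-quotient) I expect the clean formula $\nu_2(f_\lambda)=\sum_{i\ge 0} r_i-s_2(n)$, where $r_i$ is the total number of boxes in the level-$i$ cores and $n=\sum_{i\ge 0}2^i r_i$. Since $s_2(n)\le \sum_i s_2(r_i)\le \sum_i r_i$, with equality throughout precisely when every $r_i\in\{0,1\}$, this identifies the odd partitions as exactly those whose $2$-core tower is thin (at most one box at each level), in which case $(r_i)$ is the binary expansion of $n$. This is the engine of the whole theorem.

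Next I connect the tower to $2^k$-hooks through the dictionary that the $2^k$-core truncates the tower below level $k$: the $2^k$-core of $\lambda$ has tower entries $r_i$ for $i<k$ and $0$ for $i\ge k$, so its size is $\sum_{i<k}2^i r_i$ and the $2^k$-weight equals $\sum_{i\ge k}2^{i-k}r_i$. For $2^k\le n<2^{k+1}$ this weight is forced into $\{0,1\}$. If $\lambda$ is odd, then $(r_i)$ is the binary expansion of $n$, whose top bit sits at position $k$; hence $r_k=1$, $r_i=0$ for $i>k$, the weight is $1$, so $\lambda$ has a unique $2^k$-hook, and its $2^k$-core has the thin tower $(r_0,\dots,r_{k-1})$ and is therefore odd. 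Conversely, if $\lambda$ has a $2^k$-hook with odd $2^k$-core, the core's tower is thin and the single hook contributes exactly one box at level $k$, so $\lambda$'s tower is thin and $\lambda$ is odd. Uniqueness follows because adding a $2^k$-hook to a fixed $2^k$-core amounts to adding one box to one of the $2^k$ quotient components; these $2^k$ choices are in bijection with the possible leg-lengths $0,\dots,2^k-1$, each yields a thin (hence odd) tower, and distinct choices give distinct $\lambda$.

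For the cover relation, fix $2^k<n<2^{k+1}$, so that $n-1$ also lies in $[2^k,2^{k+1})$. If $\lambda\vdash n$ and $\mu\vdash n-1$ are both odd, their tower profiles are the binary expansions of $n$ and $n-1$, which share the top bit at position $k$ and vanish above it; hence $r_i(\lambda)=r_i(\mu)$ for every $i\ge k$, both equal to $1$ at $i=k$. If $\mu\lessdot\lambda$, the single added box changes the tower by $\sum_{i}2^i\,\Delta r_i=1$, and since $\Delta r_i=0$ for all $i\ge k$ this change is confined below level $k$, that is, to the $2^k$-core; thus the two $2^k$-hooks occupy the same level-$k$ slot and so have equal leg-length, while the added box realizes a cover of the $2^k$-cores. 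The converse runs the same computation backwards. The main obstacle I anticipate is not the valuation bookkeeping but the precise abacus dictionary underlying this last paragraph and the truncation claim: one must verify that removing the unique $2^k$-hook really does truncate the $2$-core tower at level $k$, that leg-length matches the runner, and that a cover in Young's lattice keeping both partitions thin corresponds, through the $2^k$-quotient, to a cover of $2^k$-cores with a fixed hook position. This is most cleanly checked on James's $2^k$-runner abacus, compared against the iterated $2$-runner abaci that build the core tower.
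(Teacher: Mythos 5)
Your engine---deriving $v_2(f_\lambda)=w(\lambda)-\nu(n)$ from the hook-length formula and the $2$-quotient, characterizing odd partitions by thin $2$-core towers, and reading off the $2^k$-hook condition from the truncation of the tower at row $k$---is exactly the machinery the paper relies on (Theorem~\ref{theorem:oddness}, i.e.\ \cite[Proposition~6.4]{olsson}, together with the core--quotient--tower dictionary used in \cite{ayyer-prasad-spallone-odd-16}). Your treatment of the first two assertions is correct: the valuation identity, the equality case of $s_2(n)\le\sum_i s_2(r_i)\le\sum_i r_i$, the fact that the $2^k$-weight is forced into $\{0,1\}$ when $2^k\le n<2^{k+1}$, and the uniqueness via the bijection between the $2^k$ addable $2^k$-hooks of a $2^k$-core and the leg-lengths $0,\dots,2^k-1$ are all sound, modulo standard abacus facts that you correctly identify and that are genuinely standard.

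The cover-relation clause, however, is asserted rather than proved, and the step you elide is the actual content of the paper's Lemma~2. From $\Delta r_i=0$ for $i\ge k$ you conclude that ``the two $2^k$-hooks occupy the same level-$k$ slot and so have equal leg-length,'' but the row-sums $r_i$ forget \emph{which} of the $2^k$ nodes in row $k$ carries the box, so equality of $r_k$ says nothing about the slot; likewise $\mu\subset\lambda$ does not by itself yield $\core{\mu}{2^k}\subset\core{\lambda}{2^k}$, since taking cores does not naively commute with containment. Closing this requires the abacus computation you defer to: a cover $\mu\lessdot\lambda$ moves one bead from position $x$ to $x+1$ on the $2^k$-abacus, hence from runner $x\bmod 2^k$ to the adjacent runner, and one must check that this move leaves untouched the unique displaced bead encoding the $2^k$-hook (so the leg-length, which is read off from that bead's displacement data, is preserved) while inducing a one-step bead move on the core configuration (so the $2^k$-cores form a cover)---and conversely. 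Until that analysis is carried out, the ``Finally'' part of the theorem, which is what actually makes $M$ a tree, remains unproved; the rest of your argument matches the paper's route and needs only this last piece.
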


Theorem~\ref{theorem:odd_rec} provides fast algorithms to
sequentially enumerate all odd partitions of $n$ and to generate a uniformly random odd partition of $n$.
See Section~\ref{sec:comparison} for an example.

Repeated applications of Theorem~\ref{theorem:odd_rec} show that any odd partition of $n$ of the form~(\ref{eq:1}) can be reduced to a partition of size $\epsilon$ by successively removing rims of hooks of size $2^{k_r}$, $2^{k_{r-1}}$, and so on. See Figure~\ref{fig:odd-rec} for an example.
\subsection{Odd-dimensional representations of hyperoctahedral groups}
\label{sec:odd-part-hyper}
The $n$'th hyperoctahedral group is the wreath product ${\mb B}_n = S_2\wr S_n = S_2^n\rtimes S_n$. 
The normal subgroup $S_2^n\lhd {\mb B}_n$ has two $S_n$-invariant characters, the trivial character, and the character $\varepsilon$ whose restriction to each factor $S_2$ is its non-trivial character.

For $\lambda \vdash n$, consider the irreducible representations $\rho_\lambda^0$ and $\rho_\lambda^1$ of ${\mb B}_n$ on the representation space $V_\lambda$ of $S_n$ defined by:
\begin{displaymath}
  \rho_\lambda^0(xw) = \rho_\lambda(w) \text{ and } \rho_\lambda^1(xw) = \epsilon(x)\rho_\lambda(w), \text{ for } x\in S_2^n, \text{ and } w\in S_n.
\end{displaymath}
Now suppose that $n = a + b$ for two non-negative integers $a$ and $b$, and that $\alpha$ is a partition of $a$, and $\beta$ a partition of $b$.
The pair $(\alpha,\beta)$ is called a {\em bipartition} of $n$ and we write $(\alpha,\beta) \models n$ in this case.  The number of bipartitions of $n$ is denoted $p_2(n)$ and it is well-known that~\cite[Sequence~A000712]{Sloane}
\[
  \sum_{n \geq 0} p_2(n) x^n = \prod_{j \geq 1} \frac{1}{(1-x^j)^2}.
\]
Consider the subgroup ${\mb B}_a\times {\mb B}_b\cong S_2^n\rtimes (S_a\times S_b)< {\mb B}_n$.
Define
\begin{equation}
  \label{eq:3}
  \rho_{\alpha\beta} = \ind_{{\mb B}_a\times {\mb B}_b}^{{\mb B}_n} \rho_\alpha^0\boxtimes \rho_\beta^1,
\end{equation}
where $\boxtimes$ is the external tensor product.
Then
\begin{displaymath}
  \{\rho_{\alpha\beta}\mid \alpha\vdash a, \beta\vdash b, a+b = n\}
\end{displaymath}
is a complete set of representatives for the set of isomorphism classes of irreducible representations of ${\mb B}_n$.
Moreover, the restriction of $V_{\alpha\beta}$ to ${\mb B}_{n-1}$ contains $V_{\gamma\delta}$ if and only if either $\gamma\in \alpha^-$ and $\delta = \beta$, or $\gamma = \alpha$ and $\delta\in \beta^-$. 
Here $\lambda^-$ is the set of all partitions that are obtained from the Young diagram of $\lambda$ by removing a corner box.
Thus the Bratteli diagram of the family $\mb B_n$ of hyperoctahedral groups turns out to be the Hasse diagram of the poset of bipartitions, partially ordered under containment.
This poset is the \emph{Cartesian product} \cite[Section~3.2]{ec1} of Young's lattice with itself.
We denote this Bratteli diagram by $Y^2$.

We say that a bipartition $(\alpha,\beta)$ is {\em odd} if the dimension of $\rho_{\alpha\beta}$ is odd.
Equation~(\ref{eq:3}) implies that the dimension of $\rho_{\alpha\beta}$ is given by:
\begin{displaymath}
  f_{\alpha\beta} = \frac{n!}{a!b!}f_\alpha f_\beta.
\end{displaymath}
As a consequence, $(\alpha,\beta)$ is an odd bipartition if and only if $\alpha$ and $\beta$ are both odd partitions, and the binomial coefficient $C(a,b)$ is odd.
Let $\bin(n)$ denote the set of place values where $1$ occurs in the binary expansion of $n$.
By Lucas's theorem (see~\cite{fine}, for example), $C(a,b)$ is odd if and only if there exists a subset $S\subset \bin(n)$ such that $\bin(a)=S$ and $\bin(b)=\bin(n) \setminus S$.
These considerations, combined with Macdonald's formula for counting the number of odd partitions, lead to his formula for the number $a_2(n)$ of odd bipartitions of $n$ from \cite{macdonald1973cox}:
\begin{theorem}
  [Macdonald \cite{macdonald1973cox}]
  \label{thm:macdonald2}
  If $n$ is an integer having binary expansion given in \eqref{eq:1},
  then
  \[
    a_2(n) = a(2n).
  \]
\end{theorem}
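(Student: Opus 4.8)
The plan is to reduce everything to Macdonald's formula (Theorem~\ref{thm:macdonald}) together with the combinatorial characterization of odd bipartitions already recorded in this section. Writing $s(m) = \sum_{i\in\bin(m)} i$ for the sum of the positions of the $1$'s in the binary expansion of $m$, the content of Theorem~\ref{thm:macdonald} is precisely that $a(m) = 2^{s(m)}$ for every $m\geq 0$ (the coefficient $\epsilon$ of $2^0$ contributes nothing to the exponent); this is the form I would use throughout. Since a bipartition $(\alpha,\beta)$ with $\alpha\vdash a$, $\beta\vdash b$, $a+b=n$ is odd exactly when $\alpha$ and $\beta$ are both odd and $C(a,b)$ is odd, grouping the odd bipartitions by the underlying composition $(a,b)$ gives
\begin{equation}
  \label{eq:a2sum}
  a_2(n) = \sum_{\substack{a+b = n \\ C(a,b)\text{ odd}}} a(a)\,a(b).
\end{equation}

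Next I would re-index \eqref{eq:a2sum} using the Lucas criterion recalled above: $C(a,b)$ is odd if and only if the binary supports $\bin(a)$ and $\bin(b)$ partition $\bin(n)$. Thus the pairs $(a,b)$ indexing \eqref{eq:a2sum} are in bijection with subsets $S\subseteq\bin(n)$, via $a = \sum_{i\in S}2^i$ and $b = \sum_{i\in\bin(n)\setminus S}2^i$. The key observation---and the only step with any real content---is that each summand is \emph{independent} of $S$: because $\bin(a)=S$ and $\bin(b)=\bin(n)\setminus S$ are disjoint with union $\bin(n)$, the formula $a(m)=2^{s(m)}$ yields
\begin{displaymath}
  a(a)\,a(b) = 2^{s(a)}\,2^{s(b)} = 2^{\sum_{i\in S} i \;+\; \sum_{i\in \bin(n)\setminus S} i} = 2^{s(n)} = a(n).
\end{displaymath}
Hence every term of \eqref{eq:a2sum} equals $a(n)$, and the sum collapses to $a_2(n) = 2^{\nu(n)}\,a(n)$, where $2^{\nu(n)}$ counts the subsets $S\subseteq\bin(n)$ (equivalently, the number $\ell(n)$ of odd pairs summing to $n$ from Section~\ref{sec:baby-case:-pascals}).

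Finally I would compare this with $a(2n)$. Doubling shifts every bit up by one position, so $\bin(2n) = \{\,i+1 : i\in\bin(n)\,\}$ and therefore $s(2n) = s(n) + \nu(n)$. Applying $a(m)=2^{s(m)}$ once more gives $a(2n) = 2^{s(n)+\nu(n)} = 2^{\nu(n)}\,a(n) = a_2(n)$, which is the claim. I expect no serious obstacle here; the single point that must be stated carefully is that the exponents in $a(a)\,a(b)$ recombine to exactly $s(n)$, which is precisely what forces every summand to equal $a(n)$ and makes the sum degenerate into a count. Everything else is routine bookkeeping with binary expansions.
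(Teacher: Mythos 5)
Your proof is correct and follows essentially the same route the paper indicates: the paper derives Theorem~\ref{thm:macdonald2} from exactly these ingredients (both components odd plus $C(a,b)$ odd, Lucas's theorem, and Macdonald's formula $a(m)=2^{s(m)}$), merely leaving the bookkeeping implicit. Your observation that every summand $a(a)a(b)$ equals $2^{s(n)}$, so the sum collapses to $2^{\nu(n)}a(n)=a(2n)$, is the right way to make that sketch precise.
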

A recursive version of Theorem~\ref{thm:macdonald2} is:
\begin{theorem}
  \label{theorem:recursive-mac2}
  We have $a_2(0)=1$, and for any positive integer $n$, let $k$ be such that $2^k\leq n<2^{k+1}$.
  Then
  \begin{displaymath}
    a_2(n)=2^{k+1} a_2(n-2^k).
  \end{displaymath}
\end{theorem}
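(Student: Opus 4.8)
The plan is to deduce the recursion directly from the closed relationship $a_2(n) = a(2n)$ of Theorem~\ref{thm:macdonald2}, combined with the recursion for ordinary odd partitions in Theorem~\ref{thm:macdonald-var}. In effect, the bipartition recursion is nothing more than the partition recursion applied to $2n$ in disguise, so no new combinatorics is needed; the whole argument is a substitution.

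First I would dispose of the base case. By Theorem~\ref{thm:macdonald2} we have $a_2(0) = a(0)$, and $a(0) = 1$ is exactly the base case of Theorem~\ref{thm:macdonald-var}, so $a_2(0) = 1$.

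Next, fix a positive integer $n$ and the integer $k$ with $2^k \leq n < 2^{k+1}$. The one point that requires care is identifying the correct power of $2$ governing $2n$: doubling the inequality gives
\[
  2^{k+1} \leq 2n < 2^{k+2},
\]
so the exponent to which Theorem~\ref{thm:macdonald-var} applies when evaluated at $2n$ is $k+1$, not $k$. Then I would chain the three facts. By Theorem~\ref{thm:macdonald2}, $a_2(n) = a(2n)$. Applying Theorem~\ref{thm:macdonald-var} with exponent $k+1$ yields $a(2n) = 2^{k+1} a(2n - 2^{k+1})$. Since $2n - 2^{k+1} = 2(n - 2^k)$, a second application of Theorem~\ref{thm:macdonald2} gives $a(2n - 2^{k+1}) = a_2(n - 2^k)$. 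Combining these equalities produces
\[
  a_2(n) = a(2n) = 2^{k+1} a(2n - 2^{k+1}) = 2^{k+1} a_2(n - 2^k),
\]
which is the asserted recursion.

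The main obstacle, such as it is, is purely bookkeeping: one must verify that the interval containing $2n$ is precisely $[2^{k+1}, 2^{k+2})$, so that the exponent appearing in Theorem~\ref{thm:macdonald-var} is $k+1$. Once this is checked, every remaining step is a direct substitution, and the doubling of the exponent (from $2^k$ in the partition recursion to $2^{k+1}$ here) is seen to come entirely from the passage $n \mapsto 2n$.
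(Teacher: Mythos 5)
Your proposal is correct and matches the paper's (implicit) derivation: the paper presents Theorem~\ref{theorem:recursive-mac2} simply as ``a recursive version of'' the closed formula $a_2(n)=a(2n)$ of Theorem~\ref{thm:macdonald2}, and your chain $a_2(n)=a(2n)=2^{k+1}a(2n-2^{k+1})=2^{k+1}a_2(n-2^k)$, with the observation that $2n$ lies in $[2^{k+1},2^{k+2})$, is exactly the intended substitution argument.
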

As in the case of symmetric groups, Theorem~\ref{theorem:recursive-mac2} has an explanation in terms of the Bratteli diagram of hyperoctahedral groups.

Let $M^2$ denote the subgraph induced in $Y^2$ by odd bipartitions.
The above description of odd bipartitions easily implies that $M^2$ is also an incomplete binary tree.
Indeed, given an odd bipartition $(\alpha,\beta)$, with $|\alpha|=a$, and $|\beta|=b$, if $n=a+b$ is odd, then exactly one of $a$ and $b$ is odd.
If $a$ is odd, then the unique odd bipartition covered by $(\alpha,\beta)$ in $Y^2$ is $(\gamma,\beta)$, where $\gamma$ is the unique odd partition covered by $\alpha$.
Also, the bijection $(\alpha,\beta)\mapsto (\beta,\alpha)$ is an automorphism of $Y^2$.
We call $M^2$ the \emph{hyperoctahedral Macdonald tree}.
Let $M^2_k$ denote the first $2^k$ rows of $M^2$, consisting of bipartitions with weights $0$ through $2^k-1$.
Call a bipartition $(\alpha,\beta)$ a \emph{hook} if either $\alpha=\emptyset$ or $\beta=\emptyset$, and the non-empty component is a hook.
A bipartition $(\alpha,\beta)$ is said to be \emph{one-dimensional} $f_{\alpha\beta}=1$.

\begin{center}
  \begin{figure}[htbp!]
    \ytableausetup{smalltableaux,aligntableaux=center}
    \includegraphics[width=\textwidth]{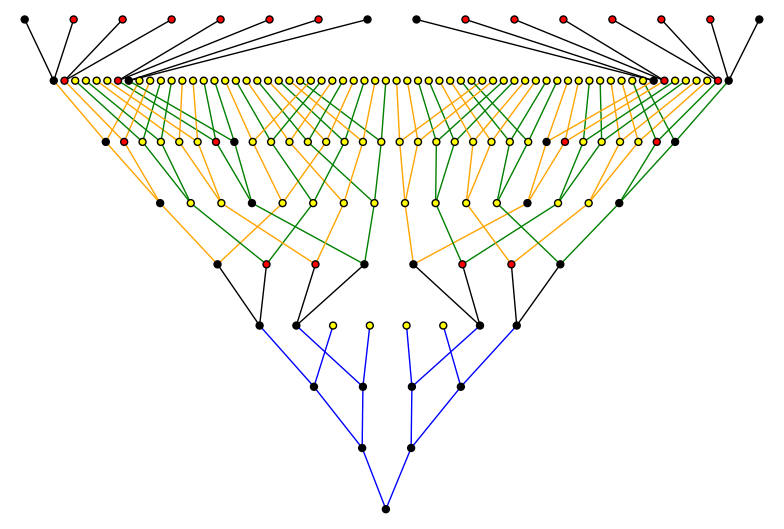}
    \caption{The recursive construction of $M^2_3$ from $M^2_4$.  Two copies of $M^2_2$ (shown in orange and green) are attached to each of the four hook-nodes in $M^2_2$ of rank $3$. The black nodes represent one-dimensional bipartitions, while the red nodes represent higher-dimensional hook bipartitions.}
    \label{fig:hypermac}
  \end{figure}
\end{center}
\begin{theorem}[Recursive description of $M^2$]
  \label{theorem:recursive-hypermac}
  $M^2_2$ is the rooted tree with a distinguished left branch and a right branch.
  \begin{displaymath}
    \includegraphics[width=2cm]{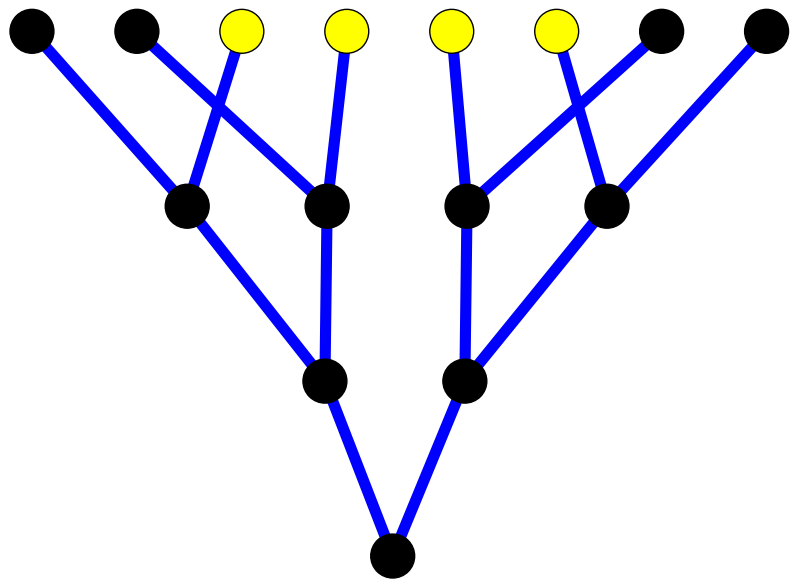}
  \end{displaymath}
  Nodes colored black in the top row of this tree are one-dimensional (and therefore also hooks).
  The remaining nodes in the top row are not hooks.
  For $k\geq 2$ we have:
  \begin{enumerate}
  \item $M^2_{k+1}$ is obtained from $M^2_k$ by attaching two copies of $M^2_k$ 
    to each hook in the top row of $M^2_k$ via an edge.
  \item The hooks in the top row of the left (respectively, right) branch of $M^2_{k+1}$ are the hooks in the left (respectively, right) branch of one copy of $M^2_k$ that ascends from each one-dimensional node in the top row of $M^2_k$.
  \item The one-dimensional nodes in the left (respectively, right) branch of the top row of $M^2_{k+1}$ are one of the two one-dimensional nodes in in the left (respectively, right) branch of one copy of $M^2_k$ that ascends from a one-dimensional node of the left (respectively, right) branch of $M^2_k$.
  \end{enumerate}
\end{theorem}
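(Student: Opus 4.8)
The plan is to imitate the proof of Theorem~\ref{theorem:recursive} for the symmetric group, pushing every statement down to the level of a single component via the characterization of odd bipartitions. Recall that $(\alpha,\beta)\models n$ is odd precisely when $\alpha$ and $\beta$ are both odd and $C(a,b)$ is odd, where $a=|\alpha|$ and $b=|\beta|$. Fix $k\geq 2$ and $n$ with $2^k\leq n<2^{k+1}$, so that the top bit of $n$ sits at position $k$. Since $C(a,b)$ is odd, Lucas's theorem gives $\bin(a)\sqcup\bin(b)=\bin(n)$, so the bit $k$ belongs to exactly one of $\bin(a),\bin(b)$. Calling the corresponding component \emph{active} (say $\alpha$, the other case following from the automorphism $(\alpha,\beta)\mapsto(\beta,\alpha)$), we have $2^k\leq a<2^{k+1}$, so by Theorem~\ref{theorem:odd_rec} the partition $\alpha$ has a unique $2^k$-hook whose removal leaves its $2^k$-core $\alpha'$, an odd partition of $a-2^k$. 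I would define the reduction $R(\alpha,\beta)=(\alpha',\beta)$ together with two pieces of bookkeeping: the side $s\in\{L,R\}$ recording which component is active, and the leg-length $\ell\in\{0,\dots,2^k-1\}$ of the removed hook.

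First I would prove the hyperoctahedral analogue of Theorem~\ref{theorem:odd_rec}: that $R$ is a bijection from the odd bipartitions of $n$ onto the product of the odd bipartitions of $n-2^k$ with $\{L,R\}$ and with $\{0,\dots,2^k-1\}$, and that it is compatible with covers, meaning that for $2^k<n<2^{k+1}$ an odd $(\gamma,\delta)\models n-1$ is covered by $(\alpha,\beta)$ if and only if the two have the same active side and the same leg-length and $R(\gamma,\delta)$ is covered by $R(\alpha,\beta)$. Bijectivity is immediate: Lucas selects the side, while Theorem~\ref{theorem:odd_rec} reconstructs the active component uniquely from its core and the leg-length, the inactive component being carried along unchanged. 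Cover-compatibility likewise reduces to the cover clause of Theorem~\ref{theorem:odd_rec} applied to the active component, since the inactive one is untouched. The $2$ choices of side times the $2^k$ choices of leg-length reproduce the factor $2^{k+1}$ of Theorem~\ref{theorem:recursive-mac2}.

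Next I would read off the three recursive axioms. For each fixed side $s$ and leg-length $\ell$ the odd bipartitions of weights $2^k,\dots,2^{k+1}-1$ carrying that data form, under $R$, an order-isomorphic copy of $M^2_k$ (the odd bipartitions of weights $0,\dots,2^k-1$); there are $2\cdot 2^k=2^{k+1}$ such copies. The weight-$2^k$ node of each copy has empty $2^k$-core, hence is a hook bipartition of weight $2^k$ with its inactive component empty; a short computation with binomial coefficients (every hook $(2^k-\ell,1^\ell)$ is odd because $\binom{2^k-1}{\ell}$ is always odd) shows that its unique parent in $M^2_k$ is again a hook, and that each of the $2^k$ top-row hooks of $M^2_k$ is the parent of exactly two such nodes, giving axiom~(1). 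For axioms~(2) and~(3) the decisive point is that $R$ removes the \emph{top} bit and so leaves untouched the weight-$1$ ancestor that defines the left/right branch; hence $R$ carries left-branch nodes to left-branch nodes and right to right whenever $n-2^k\geq 1$, and the two side-$L$ copies meeting a left-branch hook remain in the left branch. The identification of which new top-row nodes are hooks, respectively one-dimensional, is then exactly the symmetric-group tracking of Theorem~\ref{theorem:odd_rec} performed inside the active component and read off separately in each branch.

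The base case $M^2_2$ is verified directly on weights $0$ through $3$; in particular its four top-row hooks are the four one-dimensional bipartitions $((3),\emptyset)$, $((1^3),\emptyset)$, $(\emptyset,(3))$, $(\emptyset,(1^3))$, two in each branch, matching the seed. The main obstacle is the branch bookkeeping of axioms~(2) and~(3): one must check cleanly that the \emph{absolute} decorations---branch membership, hook-ness, one-dimensionality---commute with the recursively defined $R$, that is, that ``remove the top bit'' commutes with ``pass to the weight-$1$ ancestor'', and then confirm within each branch exactly which two copies attach to each hook and which of their nodes survive as hooks or one-dimensional nodes of the new top row. The two-component interaction itself is harmless, since Lucas only ever selects the active side; once branch-preservation is established, the remaining work is precisely the hook and one-dimensional tracking already carried out for the symmetric group, now performed independently in the left and right branches.
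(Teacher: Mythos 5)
Your overall strategy---establish the bipartition analogue of Theorem~\ref{theorem:odd_rec} (this is exactly Theorem~\ref{theorem:hyper-odd-rec} of the paper: the reduction $R$ together with the side and leg-length data is a bijection compatible with covers) and then read off the three axioms---is the route the paper indicates, and your treatment of the bijection, of cover-compatibility, and of axiom~(1) is sound. However, the ``decisive point'' you rely on for axioms~(2) and~(3) is false: $R$ does \emph{not} carry left-branch nodes to left-branch nodes. Concretely, $((2),(1))$ is an odd bipartition of $3$ whose unique descending chain in $M^2$ is $((2),(1))\gtrdot((2),\emptyset)\gtrdot((1),\emptyset)$, so it lies in the left branch; but its active component is $\alpha=(2)$ (the top bit of $3$ sits in $a=2$), so $R((2),(1))=(\emptyset,(1))$, which is the right-branch node of weight $1$. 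The same failure occurs in all weights, e.g.\ $((4),(2))\models 6$ is in the left branch while $R((4),(2))=(\emptyset,(2))$ is in the right branch. The reason is that the weight-$1$ ancestor of an odd bipartition of $n$ is determined by the side containing the \emph{top} bit of $n$ (the chain's active side is constant down to weight $2^k$, where the bipartition becomes $(h,\emptyset)$ or $(\emptyset,h)$ and then descends within that single component), whereas $R$ deletes precisely that top bit, so the branch of $R(\alpha,\beta)$ is governed by the \emph{second}-highest bit of $n$, which may lie in the other component.

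The patch is to replace branch-preservation of $R$ by the statement that the branch of a node of weight $n\geq 2^k$ equals its active side, i.e.\ the side label $s\in\{L,R\}$ of the copy containing it. This is in fact cleaner: all nodes of a side-$L$ copy lie in the left branch of $M^2_{k+1}$, and a side-$L$ copy is rooted at some $(h,\emptyset)$ with $h$ an odd hook of $2^k$, whose unique odd parent $(h',\emptyset)$ is a left-branch hook of $M^2_k$; this recovers the branch statistics in axioms~(2) and~(3). One further caution for axiom~(2): ``hooks in the left branch of a copy'' must mean genuine hook bipartitions of weight $2^{k+1}-1$, not $R$-preimages of hooks---for instance $((3,2,1,1),\emptyset)$ is odd of weight $7$ with $R$-image the hook $((3),\emptyset)$, yet it is not a hook---so the tracking of which top-row nodes are hooks must be done on the nose in the active component, exactly as in the symmetric-group case, rather than transported through $R$.
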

All of $M^2$ can be reconstructed from $M^2_2$ by recursively applying these rules. The fact that $M^2$ is a tree means that an odd-dimensional irreducible representation of $\mathbb{B}_n$, when restricted to $\mathbb{B}_{n-1}$, contains exactly one odd-dimensional representation of $\mathbb{B}_{n-1}$.

\begin{remark}
  The recursive rules in Theorem~\ref{theorem:recursive-hypermac} are consistent with the following observations:
  \begin{enumerate}
  \item $M^2_k$ is a rooted binary tree with $2^{\binom{n+1}2}$ nodes in its topmost row.
  \item Of these, $2^k$ nodes are hooks. $2^{k-1}$ of these hooks are on the left branch, and $2^{k-1}$ of them are on the right branch.
  \item Of these hooks, $4$ are one-dimensional; $2$ on the left branch, and $2$ on the right branch.
  \end{enumerate}
\end{remark}

Theorem~\ref{theorem:recursive-hypermac} can be proved using a characterization of vertices and edge relations in $M^2$ (the analog of Theorem~\ref{theorem:odd_rec}).

\begin{center}
  \begin{figure}
    \[
      \begin{array}{c}
        ((11,2^3,1^2),(4^2,1^4)) = \left(
        \ydiagram{1}*[*(blue)]{1+0,1,1}*[*(red)]{1+10,1+1,1+1,2,1,1}
        ,
        \ydiagram[*(yellow)]{4}*[*(green)]{4+0,4,1,1,1,1}\right)\\
        \\
        ((11,2^2,1^3),(4^2,1^4)) = \left(\ydiagram[*(blue)]{1,1}*[*(red)]{1+10,1+1,2,1,1,1}
        ,
        \ydiagram[*(yellow)]{4}*[*(green)]{4+0,4,1,1,1,1}\right)\\
      \end{array}
    \]
    \caption{The odd bipartition in the second row is the unique odd bipartition covered by the odd bipartition in the first row.}
    \label{fig:hyper-odd-rec}
  \end{figure}
\end{center}
\begin{theorem}
  \label{theorem:hyper-odd-rec}
  Let $n$ be a positive integer and let $k$ be the unique non-negative integer such that $2^k\leq n<2^{k+1}$.
  A bipartition $(\alpha,\beta)$ of $n$ is odd if and only if either $\alpha$ or $\beta$ has a hook of length $2^k$, and the bipartition $(\tilde\alpha,\tilde\beta)$ obtained after removing the rim of this hook is an odd bipartition of $n-2^k$.
  Moreover the odd bipartition $(\alpha,\beta)$ is uniquely determined by $(\tilde\alpha,\tilde\beta)$, the leg-length of the hook which was removed, and the knowledge of whether it was removed from $\alpha$ or from $\beta$.
\end{theorem}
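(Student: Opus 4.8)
The plan is to reduce everything to the symmetric-group case, Theorem~\ref{theorem:odd_rec}, together with Lucas's theorem in the form recalled above: $C(a,b)$ is odd if and only if the binary expansions of $a$ and $b$ are disjoint and partition $\bin(n)$, where $n=a+b$. Throughout I write $a=|\alpha|$ and $b=|\beta|$, and I use the criterion recalled earlier that $(\alpha,\beta)$ is odd precisely when $\alpha$ and $\beta$ are both odd partitions and $C(a,b)$ is odd.

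The first and decisive step is to locate the top bit. Since $2^k\leq n<2^{k+1}$, the place value $k$ is the largest element of $\bin(n)$. If $(\alpha,\beta)$ is odd then $C(a,b)$ is odd, so by Lucas exactly one of $a,b$ contains $2^k$ in its binary expansion; say $a$ does, the other case being symmetric via the automorphism $(\alpha,\beta)\mapsto(\beta,\alpha)$ of $Y^2$. Because $a\leq n<2^{k+1}$, this forces $2^k\leq a<2^{k+1}$, whence $b=n-a<2^k$. This alignment -- that the global top bit of $n$ is also the top bit of $a$ -- is exactly what lets me invoke Theorem~\ref{theorem:odd_rec} for $\alpha$ with the same power $2^k$, and I expect it to be the main point to get right; once it is in place, the rest is bookkeeping with binary digits.

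For the forward direction, I apply Theorem~\ref{theorem:odd_rec} to the odd partition $\alpha$ with $2^k\leq a<2^{k+1}$: then $\alpha$ has a $2^k$-hook whose removal yields the $2^k$-core $\tilde\alpha$, which is again odd. Setting $\tilde\beta=\beta$, I verify that $(\tilde\alpha,\tilde\beta)$ is an odd bipartition of $n-2^k$: both components are odd, and $C(a-2^k,b)$ is odd because $\bin(a-2^k)=\bin(a)\setminus\{k\}$ remains disjoint from $\bin(b)$. The converse runs the same computation backwards. Given that $\alpha$ carries a $2^k$-hook with odd removal $(\tilde\alpha,\beta)$, the inequality $a\leq n<2^{k+1}$ again forces $2^k\leq a<2^{k+1}$, so Theorem~\ref{theorem:odd_rec} certifies that $\alpha$ is odd; and $C(a,b)$ is odd by Lucas, since $b<2^k$ implies $k\notin\bin(b)$, so reinstating the top bit $k$ keeps the two expansions disjoint. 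Hence $(\alpha,\beta)$ is odd.

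Finally, for the uniqueness clause, I observe that the side on which the hook sits is not a free choice: since $b<2^k\leq a$, only $\alpha$ can carry a $2^k$-hook, so whether the hook is removed from $\alpha$ or from $\beta$ is forced by the data. With that side fixed, the uniqueness half of Theorem~\ref{theorem:odd_rec} recovers $\alpha$ from its $2^k$-core $\tilde\alpha$ and the prescribed leg-length, while $\beta=\tilde\beta$; thus $(\alpha,\beta)$ is determined by $(\tilde\alpha,\tilde\beta)$, the leg-length, and the chosen side.
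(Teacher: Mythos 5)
Your proposal is correct, and it assembles exactly the ingredients the paper itself sets up for this result: the criterion that $(\alpha,\beta)$ is odd iff $\alpha$ and $\beta$ are odd and $C(a,b)$ is odd, Lucas's theorem to isolate the top bit $k$ on exactly one side, and Theorem~\ref{theorem:odd_rec} applied to that component. The paper states Theorem~\ref{theorem:hyper-odd-rec} without writing out a proof, but your argument is the intended one and I see no gaps.
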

\subsection{Demihyperoctahedral groups}
\label{sec:odd-demi}

We denote by $\mb D_n$ the kernel of $\eps: \mb B_n \to \{ \pm 1\}$, where $\eps$ is the character defined at the beginning of Section~\ref{sec:odd-part-hyper}.
It is the Weyl group of type $D_n$, and called the {\em demihyperoctahedral group}.
The irreducible representations of $\mb D_n$ are of two kinds:
\begin{itemize}
\item Let $(\alpha,\beta)$ be a bipartition of $n$ such that $\alpha\neq\beta$.
  Then the irreducible representation $\rho_{\alpha\beta}$ of $\mb B_n$ remains irreducible when restricted to $\mb D_n$, and is isomorphic to the restriction of $\rho_{\beta\alpha}$ to $\mb D_n$.
\item Let $n>0$ be even, and let $\alpha$ be any partition of $n/2$.
  Then the irreducible representation $\rho_{\alpha\alpha}$ of $\mb B_n$, when restricted to $\mb D_n$, is a sum of two non-isomorphic irreducible representations of $\mb D_n$, which are denoted by $\rho_{\alpha\alpha}^+$ and $\rho_{\alpha\alpha}^-$.
\end{itemize}
The representation $\rho_{\alpha\alpha}^-$ is the twist of $\rho_{\alpha\alpha}^+$ by any element $e \in \mathbb{B}_n \setminus \mathbb{D}_n$.
For $n>0$, we have
\begin{displaymath}
  \Irr(\mb D_n)= \{ \Res^{\mathbb{B}_n}_{\mathbb{D}_n} \rho_{\alpha\beta} \mid (\alpha,\beta) \models n, \alpha \neq \beta \} \coprod \left\{ \rho_{\alpha\alpha}^{+}, \rho_{\alpha\alpha}^- \mid \alpha \vdash \frac{n}{2} \right\},
\end{displaymath}
with the second set nonempty only when $n$ is even.  The total number of representations of $\mathbb{D}_n$ is therefore equal to $\half p_2(n)$ when $n$ is odd, and equal to
\begin{displaymath}
  \half p_2(n)+ \frac{3}{2} p\left(\frac{n}{2} \right)
\end{displaymath}
when $n$ is even.
The branching rules are as follows: for every partition $\alpha$ that is covered by a partition $\beta$ in Young's lattice,
\begin{itemize}
\item $\rho_{\alpha\alpha}^\pm$ occurs in the restriction of $\rho_{\alpha\beta}$.
\item $\rho_{\alpha\beta}$ occurs in the restrictions of $\rho_{\beta\beta}^\pm$.
\item $\rho_{\alpha\gamma}$ occurs in the restriction of $\rho_{\beta\gamma}$ for every partition $\gamma$.
\end{itemize}
Let $Y^2_D$ denote the resulting Bratteli diagram.
It turns out that the subgraph induced in $Y^2_D$ by odd-dimensional representations is no longer a tree.
For example,
\begin{displaymath}
  \xymatrix{
    & ((1),(1,1)) \ar@{-}[dl]\ar@{-}[dr] &\\
    ((1),(1))^+ & & ((1),(1))^-\\
    & ((1),\emptyset) \ar@{-}[ul]\ar@{-}[ur]&
  }
\end{displaymath}
is a cycle.
\section{Determinants of representations of Coxeter groups}
\label{sec:chiral-coxeter}
In this section, we collect all the results about irreducible representations of Coxeter groups for which the determinant homomorphism is nontrivial. The results for the symmetric group are presented in Section~\ref{sec:amrich-part} and summarize the work in Ref.~\cite{APS-chiral}. 
The Solomon principle is explained in Section~\ref{sec:SolomonP}.
This is used to explain the results for all other Coxeter groups, which are taken from Ref.~\cite{Ghosh}, and appear in later subsections.
\subsection{Symmetric groups}
\label{sec:amrich-part}
Recall that a partition $\lambda$ of $n$ is said to be {\em chiral} if the corresponding irreducible representation $V_\lambda$ of $S_n$ has non-trivial determinant. 
Let $b(n)$ denote the number of chiral partitions of $n$.
The first few entries of the sequence $b(n)$ \cite[Sequence~\texttt{A272090}]{Sloane} are
\begin{displaymath}
  0, 1, 2, 3, 5, 4, 8, 12, 20, 8, 16, 24, 40, 32, 64, 88, 152, 16, 32, 48,\dotsc.
\end{displaymath}
The numbers $b(n)$ also have a simple combinatorial interpretation.
In Young's seminormal form, $V_\lambda$ has a basis indexed by standard
tableaux of shape $\lambda$. The basis vector
corresponding to a tableau $T$ is an eigenvector for $\rho_\lambda(s_1)$;
the eigenvalue is $-1$ if $2$ occurs in the first column of $T$ and $+1$
otherwise.
Thus $\lambda$ is chiral if and only if $g_\lambda$, the number of standard tableaux
where $2$ occurs in the first column, is odd.
Let
\begin{displaymath}
  C(\lambda) = \sum_{(i,j)\in \lambda} (j-i),
\end{displaymath}
be the sum of contents of cells in the Young diagram of $\lambda$.

\begin{theorem}[{Stanley~\cite[Exercise~7.55(a)]{ec2}}]
  Given a partition $\lambda$ of $n$, we have
  \begin{equation}
    \label{eq:5}
    g_\lambda = \frac{f_\lambda\left(\binom n2 - C(\lambda)\right)}{2\binom n2}.
  \end{equation}
  \label{theorem:chirality-gla}
  The partition $\lambda$ of $n$ is chiral if and only if $g_\lambda$ is odd.
\end{theorem}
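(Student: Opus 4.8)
The plan is to pass through the value of the character $\chi_\lambda$ at a transposition. Since $S_n$ is generated by transpositions, all of which are conjugate, the homomorphism $\det\circ\rho_\lambda$ is the sign character precisely when $\det\rho_\lambda(s_1)=-1$, where $s_1=(1\,2)$. By the description of Young's seminormal basis recalled just above the theorem, $\rho_\lambda(s_1)$ is diagonalizable with eigenvalue $-1$ on the $g_\lambda$ basis vectors $v_T$ for which $2$ lies in the first column of $T$, and eigenvalue $+1$ on the remaining $f_\lambda-g_\lambda$ vectors. Hence $\det\rho_\lambda(s_1)=(-1)^{g_\lambda}$, which already gives the chirality characterization, and moreover
\[
  \chi_\lambda(s_1)=\operatorname{tr}\rho_\lambda(s_1)=(f_\lambda-g_\lambda)-g_\lambda=f_\lambda-2g_\lambda,
\]
so that $g_\lambda=\tfrac12\bigl(f_\lambda-\chi_\lambda(s_1)\bigr)$. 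Thus formula \eqref{eq:5} is equivalent to the content-sum evaluation $\chi_\lambda(s_1)=f_\lambda\,C(\lambda)/\binom n2$.

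To establish that evaluation I would use the Jucys--Murphy elements $X_k=\sum_{i<k}(i\,k)\in\C[S_n]$, which in the seminormal basis act diagonally by $X_k\,v_T=c_T(k)\,v_T$, where $c_T(k)$ is the content $j-i$ of the cell $(i,j)$ of $T$ containing the entry $k$. Their sum
\[
  Z=\sum_{k=2}^{n}X_k=\sum_{i<j}(i\,j)
\]
is the sum of all $\binom n2$ transpositions, hence a central element of $\C[S_n]$; by Schur's lemma it acts on $V_\lambda$ by a scalar. Summing the eigenvalues above over $k$, and using that the cell containing $1$ has content $0$, shows this scalar equals $\sum_{(i,j)\in\lambda}(j-i)=C(\lambda)$, independent of the tableau $T$.

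Finally I would compute the trace of $Z$ on $V_\lambda$ in two ways: once as $C(\lambda)\,f_\lambda$, since $Z$ acts by the scalar $C(\lambda)$; and once as $\binom n2\,\chi_\lambda(s_1)$, since the transpositions form a single conjugacy class and so each contributes the same trace $\chi_\lambda(s_1)$. Equating the two gives $\chi_\lambda(s_1)=f_\lambda\,C(\lambda)/\binom n2$, and substituting into $g_\lambda=\tfrac12\bigl(f_\lambda-\chi_\lambda(s_1)\bigr)$ yields \eqref{eq:5}. I expect the main obstacle to be the justification that the Jucys--Murphy elements act by contents on the seminormal basis, as this is the one substantive ingredient; it is standard in the Okounkov--Vershik approach to the representation theory of $S_n$, and everything surrounding it is bookkeeping.
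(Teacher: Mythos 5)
Your proof is correct, and for the chirality criterion it is exactly the argument the paper sketches just before the theorem statement: $s_1$ acts diagonally on the seminormal basis with eigenvalue $-1$ precisely on the $g_\lambda$ tableaux having $2$ in the first column, so $\det\rho_\lambda(s_1)=(-1)^{g_\lambda}$. The formula \eqref{eq:5} itself is not proved in the paper (it is cited to Stanley), and your derivation of it is the standard one: reduce to the character evaluation $\chi_\lambda(s_1)=f_\lambda C(\lambda)/\binom n2$ and obtain that by computing the scalar by which the central class sum of transpositions acts, using the Jucys--Murphy eigenvalues to identify that scalar as $C(\lambda)$ rather than circularly via the central character formula. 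The only ingredient you take on faith, the content eigenvalues of the Jucys--Murphy elements on the seminormal basis, is indeed standard and is the right thing to cite; everything else checks out.
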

We now state the main result of Ref.~\cite{APS-chiral}.
\begin{theorem}
  \label{theorem:all}
  If $n$ is an integer having binary expansion given in \eqref{eq:1},
  then the number of chiral partitions of $n$ is given by
  \begin{displaymath}
    b(n)=a(n)\left(\half  +  \sum_{v=1}^{k_1-1} 2^{v k_1 - \frac{v^2+3v+4}2} + \epsilon \cdot 2^{\binom{k_1}{2}-k_1} \right).
  \end{displaymath}
\end{theorem}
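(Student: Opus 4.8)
The plan is to convert the combinatorial notion of chirality into a purely $2$-adic condition and then count within strata indexed by the $2$-adic valuation of $f_\lambda$. Write $N=\binom n2$ and let $v_2$ denote the $2$-adic valuation. By Theorem~\ref{theorem:chirality-gla}, $\lambda$ is chiral iff $g_\lambda=\frac{f_\lambda(N-C(\lambda))}{2N}$ is odd, i.e. iff
\[
  v_2(f_\lambda)+v_2\!\left(N-C(\lambda)\right)=1+v_2(N).
\]
A short computation from \eqref{eq:1} shows $v_2(N)=k_1-1$ in both cases $\epsilon=0$ and $\epsilon=1$, so the chirality condition becomes the clean master equation $v_2(f_\lambda)+v_2(N-C(\lambda))=k_1$. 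In particular $v:=v_2(f_\lambda)$ ranges only over $0\le v\le k_1$, and --- in contrast to the case of odd partitions --- chirality does \emph{not} force $\lambda$ to be odd: for example $(4,1)\vdash 5$ has $v_2(f_\lambda)=2=k_1$ and $v_2(N-C(\lambda))=0$, yet is chiral.

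Next I would stratify by $v=v_2(f_\lambda)$, writing $b(n)=\sum_{v=0}^{k_1}\#\{\lambda\vdash n : v_2(f_\lambda)=v,\ v_2(N-C(\lambda))=k_1-v\}$. Both constraints live on the $2$-core tower of $\lambda$, which is the natural home for Macdonald's core-and-quotient techniques. On one hand, the number of cells appearing in the tower equals $\nu(n)+v_2(f_\lambda)$ (this is exactly what underlies Theorem~\ref{thm:macdonald} and Theorem~\ref{theorem:odd_rec}, the odd partitions being those with the minimal $\nu(n)$ cells), so fixing $v$ fixes the number of ``excess'' cells in the tower and makes the set $\{v_2(f_\lambda)=v\}$ enumerable by the same bookkeeping. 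On the other hand, passing to the $2$-quotient scales the contributions to $C(\lambda)$ at level $\ell$ by $2^{\ell}$ up to lower-order offsets linear in the quotient sizes, so iterating down the tower expands $C(\lambda)$ as a sum over levels in which the bottom levels dominate modulo a fixed power of $2$. Hence the condition $v_2(N-C(\lambda))=k_1-v$ reduces to a congruence on the lowest levels of the tower.

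The count then separates by level. For $v=0$ (the $a(n)$ odd partitions of Theorem~\ref{thm:macdonald}) the condition $v_2(N-C(\lambda))=k_1$ is governed by a single tower digit, which I expect to show is equidistributed, so that exactly half of the odd partitions are chiral, giving the leading term $\tfrac12 a(n)$. For $1\le v\le k_1-1$ the $v$ excess cells can be placed in the tower in a number of ways counted by powers of $2$, subject to the content congruence; tracking how each placement simultaneously raises $v_2(f_\lambda)$ and shifts $C(\lambda)$ modulo $2^{k_1-v+1}$ should yield the factor $2^{vk_1-\frac{v^2+3v+4}2}$ relative to $a(n)$, the quadratic exponent reflecting the pairwise interactions among the excess cells. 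Finally, when $\epsilon=1$ the extra units box supplies exactly one further admissible placement at the top stratum $v=k_1$, contributing the correction $\epsilon\cdot 2^{\binom{k_1}2-k_1}$ (and no such stratum survives when $\epsilon=0$); summing over $v$ produces the stated formula. One can sanity-check the whole stratification on $n=5$, where the five chiral partitions split as $(2,2,1),(1^5)$ in stratum $v=0$, $(3,1,1)$ in stratum $v=1$, and $(4,1),(2,1,1,1)$ in stratum $v=k_1=2$.

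The main obstacle is the joint control in the middle step: $v_2(f_\lambda)$ is a global statistic of the core tower, whereas $C(\lambda)$ modulo a power of $2$ is sensitive to the precise placement of cells at the lowest levels, and the two must be controlled at once. Making the equidistribution at $v=0$ rigorous, and then pinning down the \emph{exact} number of admissible placements of the $v$ excess cells so as to recover the quadratic exponent $vk_1-\frac{v^2+3v+4}2$ rather than merely its order of magnitude, is where the genuine work lies. I also expect the boundary cases $k_1=1$, small $n$, and the careful determination of the dominant content digit to require separate verification.
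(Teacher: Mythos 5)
Your setup coincides with the paper's: Stanley's formula (Theorem~\ref{theorem:chirality-gla}) turns chirality into the $2$-adic equation $v_2(f_\lambda)+v_2\left(\binom n2-C(\lambda)\right)=1+v_2\left(\binom n2\right)=k_1$, your stratification by $v=v_2(f_\lambda)$ is exactly the refined count $b_v(n)$ of Theorem~\ref{theorem:refined} (which the paper sums to obtain Theorem~\ref{theorem:all}), and your statement that the number of cells in the $2$-core tower is $\nu(n)+v_2(f_\lambda)$ is Olsson's Theorem~\ref{theorem:oddness}. Your preliminary checks are also correct: $v_2\left(\binom n2\right)=k_1-1$ in both parity cases, chirality does not force oddness, and the $n=5$ strata $(2,1,2)$ match $b_0(5),b_1(5),b_2(5)$.

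The gap is that everything after the stratification is asserted rather than proved, and the one substantive mechanism you propose is too coarse to deliver the exponents. The claim that passing to the $2$-quotient ``scales the contributions to $C(\lambda)$ at level $\ell$ by $2^\ell$ up to lower-order offsets,'' so that chirality ``reduces to a congruence on the lowest levels of the tower,'' is not the reduction that works: the actual characterization (Theorem~\ref{theorem:chiral-towers}) is not a congruence on $C(\lambda)$ but a case analysis on the $2$-core of $\lambda$, and in the central case the condition is that $\bin(a)\cap\bin(b)=\{j\}$ with $j=v_2(a)=v_2(b)$, where $(a,b)$ are the sizes of the $2$-quotient --- a binomial-coefficient parity constraint arising from a recursion for $g_\lambda$ in terms of $(\alpha,\beta)$, not from tracking $C(\lambda)$ modulo a power of $2$. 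Without that characterization you cannot determine the ``admissible placements'' of the $v$ excess cells, which is precisely where the exponents $vk_1-\frac{v^2+3v+4}{2}$ and $\binom{k_1}{2}-k_1$ come from; you acknowledge this yourself. Similarly, the $v=0$ statement that exactly half the odd partitions are chiral is true, but it follows from the precise criterion ``$k_1\in\bin(a)$ (resp.\ $\bin(b)$) according to the parity of $n$'' in part (\ref{odd-chiral}) of Theorem~\ref{theorem:chiral-towers}, not from a generic equidistribution heuristic. In short, the proposal is a correct outline of the paper's route, but the mathematical content of the theorem --- establishing Theorem~\ref{theorem:chiral-towers} and carrying out the enumeration of Theorem~\ref{theorem:refined} --- is not supplied.
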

For any integer $m$, let $v_2(m)$ denote the {\em $2$-adic valuation} of $m$, that is, the largest integer $v$ such that $2^v$ divides $m$.
Theorem~\ref{theorem:all} is a consequence of the following more refined enumerative result.
\begin{theorem}
  \label{theorem:refined}
  If $n$ is an integer having binary expansion (\ref{eq:1}),
  then the number $b_v(n)$ of chiral partitions $\lambda$ of $n$ for which $v_2(f_\lambda) = v$ is given by
  \begin{displaymath}
    b_v(n) = 2^{k_2+\dotsb+k_r}\times
    \begin{cases}
      2^{k_1-1} & \text{if } v =0,\\
      2^{(v+1)(k_1-2)-\binom v2} &\text{if } 0<v<k_1,\\
      \epsilon 2^{\binom{k_1}2} &\text{if } v = k,\\
      0 &\text{if } v> k_1.
    \end{cases}
  \end{displaymath}
\end{theorem}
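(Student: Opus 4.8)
The plan is to convert chirality into a $2$-adic valuation condition and then to count via the $2$-core tower. By Theorem~\ref{theorem:chirality-gla}, $\la$ is chiral exactly when $g_\la$ is odd, and \eqref{eq:5} gives $v_2(g_\la) = v_2(f_\la) + v_2\!\left(\binom n2 - C(\la)\right) - 1 - v_2\!\left(\binom n2\right)$. Since exactly one of $n,\,n-1$ is even and that one has $2$-adic valuation $k_1$ (by \eqref{eq:1}), we get $v_2\!\left(\binom n2\right) = k_1 - 1$. Hence a partition $\la$ with $v_2(f_\la) = v$ is chiral if and only if
\[
  v_2\!\left(\binom n2 - C(\la)\right) = k_1 - v .
\]
In particular no such $\la$ exists once $v > k_1$, which already settles the last case $b_v(n) = 0$. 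It remains to count, for each $v \le k_1$, the partitions of $n$ with $v_2(f_\la) = v$ whose content sum realizes valuation exactly $k_1 - v$.

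Two inputs feed this count, both read off the $2$-core tower. First, Macdonald's degree formula, written in the form $v_2(f_\la) = \left(\sum_i m_i\right) - s_2(n)$ where $m_i$ is the total number of cells at level $i$ of the tower and $s_2(n) = r + \epsilon$ is the binary digit sum, shows that $v_2(f_\la) = v$ forces the tower to carry total weight $r + \epsilon + v$. Because $v \le k_1 < k_2$, this deficiency is too small to disturb the high-order bits: the levels $k_2, \dots, k_r$ must each carry a single cell, freely placed in one of $2^{k_j}$ positions, which accounts for the uniform factor $2^{k_2 + \dots + k_r}$ and reduces the problem to the levels $0, \dots, k_1$ carrying weight $2^{k_1} + \epsilon$. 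Second, expressing $C(\la)$ through first-column hook lengths (beta-sets) and splitting them by parity yields the recursion $C(\la) = 4\bigl(C(\la^{(0)}) + C(\la^{(1)})\bigr) + (\text{terms linear in the sizes and bead counts})$ under passage to the $2$-quotient; the factor $4$ shows that $C(\la) \bmod 2^{k_1 + 1}$ is controlled by the bottom of the tower.

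To make the reduction precise I would peel off the top hook, establishing $b_v(n) = 2^{k_r}\, b_v(n - 2^{k_r})$ whenever $r \ge 2$. Extending Theorem~\ref{theorem:odd_rec} to the chiral (possibly non-odd) partitions at hand, removal of the rim of a $2^{k_r}$-hook lowers both the total tower weight $\sum_i m_i$ and the digit sum $s_2(n)$ by one, hence preserves $v_2(f_\la)$. Moreover an explicit evaluation of the content sum of a rim hook of size $2^{k_r}$, combined with $\binom n2 - \binom{n - 2^{k_r}}2$, shows that $\binom n2 - C(\la)$ changes by a multiple of $2^{k_r}$; as $2^{k_r} > 2^{k_1 - v}$, the valuation in the displayed criterion is unchanged, so chirality is preserved and the $2^{k_r}$ choices of leg length give the stated factor. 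Iterating reduces everything to the base cases $n = 2^{k_1}$ ($\epsilon = 0$) and $n = 2^{k_1} + 1$ ($\epsilon = 1$), where $2^{k_2 + \dots + k_r} = 1$.

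The base case is where the real work lies, and I expect it to be the main obstacle. Here one must enumerate the partitions whose tower occupies levels $0, \dots, k_1$ with total weight $1 + \epsilon + v$ and total size $2^{k_1} + \epsilon$, and for each such configuration compute $v_2\!\left(\binom n2 - C(\la)\right)$ exactly rather than up to a congruence. The delicate point is to show that, among these configurations, the values of $C(\la)$ modulo $2^{k_1 - v + 1}$ are distributed so that precisely $2^{k_1 - 1}$ of them (for $v = 0$), respectively $2^{(v+1)(k_1 - 2) - \binom v2}$ of them (for $0 < v < k_1$), realize valuation exactly $k_1 - v$; and to verify the parity phenomenon that the extremal case $v = k_1$, which demands $\binom n2 - C(\la)$ odd, can occur only when $\epsilon = 1$, contributing $\epsilon\, 2^{\binom{k_1}2}$. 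Carrying out this $2$-adic analysis of content sums over the bottom of the core tower, and assembling the pieces with the free factor from the higher levels, yields the formula for $b_v(n)$.
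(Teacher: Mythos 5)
Your reduction of chirality to the condition $v_2\bigl(\tbinom n2 - C(\lambda)\bigr) = k_1 - v$ is correct, and it does dispose of the case $v > k_1$. But the rest of the argument has two genuine gaps. First, the structural claim that $v \le k_1$ forces the levels $k_2, \dots, k_r$ of the $2$-core tower to each carry a single cell is false for general partitions with $v_2(f_\lambda) = v$: for $n = 20 = 2^2 + 2^4$ and $v = 1 < k_1 = 2$, the weight vector $(w_0,\dots,w_4) = (0,0,1,2,0)$ satisfies $\sum_i w_i 2^i = 20$ and $e_2 = 1$, yet $w_{k_2} = w_4 = 0$. So the factor $2^{k_2+\dots+k_r}$ does not split off merely from the inequality $v < k_2$; one must show that \emph{chirality} excludes such configurations, and that is precisely the content of the characterization in Theorem~\ref{theorem:chiral-towers}, which the paper takes as the input to this count and which you have not established. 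Relatedly, your peeling step $b_v(n) = 2^{k_r}\, b_v(n - 2^{k_r})$ presupposes that every chiral $\lambda$ with $v_2(f_\lambda)=v$ has a $2^{k_r}$-hook; Theorem~\ref{theorem:odd_rec} guarantees this only for odd partitions ($v=0$), and the extension to $v>0$ is exactly what is in question.

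Second, and more importantly, the base case $n = 2^{k_1}+\epsilon$ --- where the three nontrivial values $2^{k_1-1}$, $2^{(v+1)(k_1-2)-\binom v2}$ and $\epsilon\, 2^{\binom{k_1}2}$ must actually be produced --- is explicitly deferred (``I expect it to be the main obstacle''). All of the content of the theorem lives there: one needs the exact distribution of $v_2\bigl(\tbinom n2 - C(\lambda)\bigr)$ over tower configurations of prescribed weight, not merely a congruence for $C(\lambda)$ modulo a power of $2$. The paper's route is to first prove the case-by-case description of chiral partitions in Theorem~\ref{theorem:chiral-towers} (odd $\lambda$ with a condition on $\bin(a)$; $2$-core $\emptyset$ or $(1)$ with $\bin(a)\cap\bin(b)$ a prescribed singleton; $2$-core $(2,1)$) and then count the towers satisfying each condition. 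Your outline, if completed, would amount to reproving that characterization; as written it asserts the answer rather than deriving it.
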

Theorem~\ref{theorem:refined} follows from the characterization of chiral partitions in Theorem~\ref{theorem:chiral-towers}.
The relevant combinatorial data for this characterization is the notion of {\em $2$-core towers} of partitions, which is roughly the following. To each partition $\lambda \vdash n$, we associate a binary tree, each node of which is a partition. Each of these partitions is a $2$-core.
The number of non-trivial rows in the $2$-core tower of $\lambda$ is at most the number of digits in the binary expansion of $n$.
We refer to Refs~\cite{APS-chiral,olsson} for the details.
Let $w_i(\lambda)$ denote the sum of the sizes of the partitions in the $i$'th row of the $2$-core tower of $\lambda$.
We then have
\begin{displaymath}
  n = \sum_{i=0}^\infty w_i(\lambda)2^i.
\end{displaymath}
Let $\nu(n)$ denote the number of times $1$ occurs in the binary expansion of $n$ (for $n$ as in (\ref{eq:1}), $\nu(n) = r+\epsilon$).
Also let $w(\lambda) = \sum_{i\geq 0} w_i(\lambda)$.
Define the \emph{$2$-deviation} of $\lambda$ as
\begin{displaymath}
  e_2(\lambda) = w(\lambda)-\nu(|\lambda|).
\end{displaymath}
The following result gives a formula for the $2$-adic valuation $v_2(f_\lambda)$.

\begin{theorem}[{\cite[Proposition~6.4]{olsson}}]
  \label{theorem:oddness}
  For any partition $\lambda$, $v_2(f_\lambda) = e_2(\lambda)$.
\end{theorem}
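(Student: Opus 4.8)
The plan is to reduce the statement to a single combinatorial identity and then prove that identity by induction along the branches of the $2$-core tower. Write $n=|\lambda|$ and let $h_\lambda(i,j)$ denote the hook length of the cell $(i,j)\in\lambda$. By the hook-length formula of Frame, Robinson and Thrall,
\[
  v_2(f_\lambda)=v_2(n!)-\sum_{(i,j)\in\lambda}v_2\bigl(h_\lambda(i,j)\bigr),
\]
and Legendre's formula gives $v_2(n!)=n-\nu(n)$. Since $e_2(\lambda)=w(\lambda)-\nu(n)$ by definition, the theorem is equivalent to the clean identity
\[
  V(\lambda):=\sum_{(i,j)\in\lambda}v_2\bigl(h_\lambda(i,j)\bigr)=n-w(\lambda),
\]
from which the theorem follows by subtracting the two displays. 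So I would prove this last identity.

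The key input is the classical correspondence between $2$-hooks and the $2$-quotient (see \cite{2009JamesKerber} or \cite{olsson}): if $(\lambda^{(0)},\lambda^{(1)})$ is the $2$-quotient of $\lambda$, then the multiset of hook lengths of $\lambda$ divisible by $2$ is exactly $2$ times the multiset of all hook lengths of $\lambda^{(0)}$ and $\lambda^{(1)}$ taken together. Writing $m=|\lambda^{(0)}|+|\lambda^{(1)}|$ for the total size of the quotient, so that $n=w_0(\lambda)+2m$ with $w_0(\lambda)$ the size of the $2$-core, this correspondence yields the recursion
\[
  V(\lambda)=m+V(\lambda^{(0)})+V(\lambda^{(1)}),
\]
since each hook length $2h'$ contributes $1+v_2(h')$ to $V(\lambda)$. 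On the other side, the $2$-core tower of $\lambda$ has the $2$-core in row $0$ and, by its very construction, the subtrees hanging from the two children of the root are the $2$-core towers of $\lambda^{(0)}$ and $\lambda^{(1)}$; hence $w_i(\lambda)=w_{i-1}(\lambda^{(0)})+w_{i-1}(\lambda^{(1)})$ for $i\ge1$ and therefore $w(\lambda)=w_0(\lambda)+w(\lambda^{(0)})+w(\lambda^{(1)})$.

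With both recursions in hand the induction on $n$ is immediate: assuming $V(\lambda^{(t)})=|\lambda^{(t)}|-w(\lambda^{(t)})$ for $t=0,1$, I would compute
\[
  V(\lambda)=m+\sum_{t}\bigl(|\lambda^{(t)}|-w(\lambda^{(t)})\bigr)=2m-\bigl(w(\lambda)-w_0(\lambda)\bigr)=n-w(\lambda),
\]
using $n=w_0(\lambda)+2m$; the base case is a $2$-core, where $V(\lambda)=0$ and $n=w_0(\lambda)=w(\lambda)$. I expect the only real obstacle to be a careful treatment of the hook-length/$2$-quotient correspondence—most cleanly handled on the abacus, where passing to the $2$-quotient splits the beads onto two runners and a $2$-hook removal is precisely a one-space upward slide of a single bead on its runner, i.e. the removal of one cell from the corresponding quotient partition—while the bookkeeping identifying the subtrees of the core tower with the towers of the quotient components, and the final induction, are routine.
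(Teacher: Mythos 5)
Your proof is correct: the reduction via the hook-length formula and Legendre's formula to the identity $\sum_{(i,j)\in\lambda}v_2(h_\lambda(i,j))=n-w(\lambda)$, and the induction using the $2$-quotient hook correspondence together with the recursive structure of the $2$-core tower, all check out. The paper itself gives no proof, citing Olsson's Proposition~6.4 instead, and your argument is essentially the standard one found there, so there is nothing to flag.
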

In particular, a partition is odd if and only if its $2$-core tower has at most one cell in each row.
Theorem~\ref{theorem:oddness} characterizes partitions parameterizing representations with dimensions of specified $2$-adic valuation in terms of their $2$-core towers.
\begin{theorem}[{\cite[Theorem 6]{APS-chiral}}]
  \label{theorem:chiral-towers}
  Let $\la$ be a partition of $n = a+b$ written as in \eqref{eq:1} with $2$-quotient $(\alpha,\beta)$ with $|\alpha|=a$ and $|\beta|=b$.
  Then $\lambda$ is chiral if and only if one of the following holds:
  \label{chiral}
  \begin{enumerate}
  \item  \label{odd-chiral} $\la$ is odd, and
    \begin{enumerate}
    \item if $n$ is even, then $k_1 \in \bin(a)$.
    \item if $n$ is odd, then $k_1 \in \bin(b)$.
    \end{enumerate}
  \item $\core{\la}{2} =\emptyset$ or $(1)$, and
    \begin{enumerate}
    \item $\alpha$ and $\beta$ are odd,
    \item $\bin(a) \cap \bin(b)= \{ j \}$, with $j=v_2(a)=v_2(b)$.
    \end{enumerate}
  \item $\core{\la}{2}=(2,1)$ and $\phi(\alpha, \beta)$ is odd.
  \end{enumerate}
\end{theorem}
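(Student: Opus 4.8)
The plan is to convert the statement about determinants into a single $2$-adic valuation equation and then read off the three cases from a content-sum computation. By Theorem~\ref{theorem:chirality-gla}, $\lambda$ is chiral if and only if $g_\lambda$ is odd, i.e. $v_2(g_\lambda)=0$. Applying $v_2$ to the right-hand side of~\eqref{eq:5} (valid whenever $g_\lambda\neq 0$, the case $\lambda=(n)$ being excluded since it is never chiral), additivity gives
\[
  v_2(g_\lambda) = v_2(f_\lambda) + v_2\!\left(\binom n2 - C(\lambda)\right) - 1 - v_2\!\left(\binom n2\right).
\]
The outer terms are elementary: since $\binom n2 = \tfrac{n(n-1)}2$ and exactly one of $n,n-1$ is even, one checks from~\eqref{eq:1} that $v_2(\binom n2) = k_1-1$ in every case (using $v_2(n)=k_1$ when $n$ is even and $v_2(n-1)=k_1$ when $n$ is odd). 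For the term $v_2(f_\lambda)$ I would invoke Theorem~\ref{theorem:oddness} to substitute the $2$-deviation $e_2(\lambda)=w(\lambda)-\nu(n)$, read directly off the $2$-core tower. Chirality then reduces to the single identity $v_2\!\left(\binom n2 - C(\lambda)\right) = k_1 - e_2(\lambda)$.

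The heart of the argument is therefore a formula for the $2$-adic valuation of $\binom n2 - C(\lambda)$ in terms of $2$-core and $2$-quotient data. Here I would first establish a recursion for the sum of contents under the $2$-quotient map: passing from $\lambda$ to its $2$-quotient $(\alpha,\beta)$ rescales contents by a factor of $2$ and introduces correction terms that are linear in $a=|\alpha|$ and $b=|\beta|$ with coefficients determined by the $2$-core. Iterating this down the successive rows of the $2$-core tower expresses $\binom n2 - C(\lambda)$ as an explicit integer combination of the row weights $w_i(\lambda)$, from which its valuation can be extracted. The key consequence to isolate is that, for an \emph{odd} $\lambda$ (where $e_2(\lambda)=0$, so the target valuation is $k_1$), the valuation of $\binom n2 - C(\lambda)$ equals $k_1$ exactly when $k_1$ lies in the binary support of $a$ (for $n$ even) or of $b$ (for $n$ odd); this yields case~(1).

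Finally I would treat the non-odd partitions, where $e_2(\lambda)=v_2(f_\lambda)\geq 1$ forces $v_2\!\left(\binom n2 - C(\lambda)\right)=k_1-e_2(\lambda)<k_1$. Because the content recursion bounds how small this valuation can be, such an equation admits solutions only when the $2$-core of $\lambda$ is very small; careful bookkeeping of the correction terms shows this happens only for $\core{\la}{2}\in\{\emptyset,(1)\}$, giving case~(2) together with the constraints that $\alpha,\beta$ be odd and that $\bin(a)\cap\bin(b)=\{j\}$ with $j=v_2(a)=v_2(b)$ (equivalently, by Kummer's theorem, that $C(a,b)$ have exactly one carry so $e_2(\lambda)=1$), or for $\core{\la}{2}=(2,1)$, giving case~(3). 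The step I expect to be the main obstacle is precisely this core-$(2,1)$ regime: there the leading terms of $\binom n2 - C(\lambda)$ cancel, and the surviving valuation is governed by a finer invariant of the $2$-quotient, recorded as the parity condition ``$\phi(\alpha,\beta)$ odd.'' Proving that the cancellation occurs only for core $(2,1)$, and computing the next $2$-adic digit correctly, is where the genuine work lies; once the valuation bounds are in place, the remaining cases follow by monotonicity.
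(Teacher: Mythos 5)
Your overall strategy is the same as the one used in \cite{APS-chiral} (which is where this survey defers the proof): start from the Solomon principle in the form of Theorem~\ref{theorem:chirality-gla}, take $2$-adic valuations in Stanley's formula \eqref{eq:5} to reduce chirality to the single equation $v_2\bigl(\binom n2 - C(\lambda)\bigr) = k_1 - e_2(\lambda)$ (your computation $v_2(\binom n2)=k_1-1$ and the use of Theorem~\ref{theorem:oddness} are correct), and then evaluate the left side via the $2$-core and $2$-quotient. However, as a proof the proposal has a genuine gap: the entire content of the theorem lives in the ``content recursion'' that you only gesture at. You never state, let alone prove, the formula expressing $C(\lambda)$ in terms of $\core{\la}{2}$ and $(\alpha,\beta)$, you never explain why large $2$-cores are excluded, and you never define $\phi(\alpha,\beta)$ --- which does not appear anywhere else in your argument, so case (3) is not derived at all. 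Saying ``this is where the genuine work lies'' identifies the hard step but does not carry it out.

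There is also a concrete error. You assert that condition (2b), namely $\bin(a)\cap\bin(b)=\{j\}$ with $j=v_2(a)=v_2(b)$, is ``equivalently, by Kummer's theorem, that $C(a,b)$ have exactly one carry so $e_2(\lambda)=1$.'' This is false: the single common bit at position $j$ guarantees at least one carry, but that carry can propagate. For example $a=3$, $b=1$ satisfies (2b) ($\bin(3)\cap\bin(1)=\{0\}$, $v_2(3)=v_2(1)=0$) yet $3+1$ produces two carries, so $e_2(\lambda)=2$. This is not a cosmetic slip: for $n=8$ the resulting partitions (empty $2$-core, $\alpha\in\{(3),(1^3)\}$, $\beta=(1)$, and the swap) are exactly the four chiral partitions with $v_2(f_\lambda)=2$ predicted by Theorem~\ref{theorem:refined}, so if case (2) were restricted to $e_2(\lambda)=1$ your characterization would miss them. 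The correct bookkeeping must allow $e_2(\lambda)$ to take every value $1\leq v< k_1$ in case (2) (and $v=k_1$ in cases (2)/(3) when $\epsilon=1$), matching the valuation $k_1-e_2(\lambda)$ of $\binom n2 - C(\lambda)$ in each instance; this is precisely the part of the argument that the unproven content formula is needed for.
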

In particular, one can infer whether a partition is chiral or not by looking at its $2$-core tower. From (\ref{odd-chiral}), it follows that half of the odd partitions are chiral and half are achiral. 

Not only does Theorem~\ref{theorem:chiral-towers} prove the enumerative results in Theorems~\ref{theorem:all} and~\ref{theorem:refined}, it also provides fast algorithms to:
\begin{itemize}
\item sequentially enumerate all chiral partitions of $n$ with given $v_2(f_\lambda)$, and
\item generate a uniformly random chiral partition of $n$ with given $v_2(f_\lambda)$.
\end{itemize}
See Section~\ref{sec:comparison} for an example.

The proof of Theorem~\ref{theorem:chiral-towers} takes as its starting point the \emph{Solomon principle}, which we now discuss in the context of a general Coxeter group.
\subsection{Solomon principle} 
\label{sec:SolomonP}
Let $\rho$ be a representation of a Coxeter group $W$.  In this section we show how to infer $\det \rho$ from its character.  So let $(W,S)$ be a Coxeter group ($S$ is a certain set of generators of order $2$; see \cite{Bou.Lie.4-6}).   There is a unique multiplicative character $\eps_W$ so that $\eps_W(s)=-1$ for each $s \in S$, namely
\begin{displaymath}
  \eps_W(w)=(-1)^{\ell(w)},
\end{displaymath}
where $\ell(w)$ is the length of $w$ with respect to $S$.

For the Coxeter groups of type $A_n$, $D_n$, $E_6$, $E_7$, $E_8$, $H_3$, $H_4$, and $I_2(p)$ with $p$ odd, the trivial character and $\eps_W$ are the only multiplicative characters.  This is equivalent to the abelianization $W_{\ab}$ having order  $2$.
\begin{proposition} 
  \label{solomon.principle} 
  Suppose $|W_{\ab}|=2$ , and let $s \in S$.  If $\rho$ is a representation of $W$, then
  \begin{displaymath}
    \det \rho=(\eps_W)^b,
  \end{displaymath}
  where
  \begin{displaymath}
    b=\frac{\dim \rho-\chi_\rho(s)}{2}.
  \end{displaymath}
\end{proposition}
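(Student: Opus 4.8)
The plan is to exploit the hypothesis $|W_{\ab}| = 2$ to pin $\det\rho$ down to a single unknown parity, and then to fix that parity by a direct eigenvalue computation at the involution $s$.

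First I would observe that $\det\rho \colon W \to \C^*$ is a multiplicative character, being the composite of $\rho$ with the determinant homomorphism. Since multiplicative characters factor through $W_{\ab}$, and $|W_{\ab}| = 2$ by assumption, the trivial character and $\eps_W$ are the only two possibilities. Hence $\det\rho = (\eps_W)^c$ for some $c \in \{0,1\}$, and it remains only to determine the parity of $c$.

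To do this I would evaluate both sides at the chosen generator $s \in S$. Because $\eps_W(s) = -1$, we have $\det\rho(s) = (-1)^c$, so the problem reduces to computing $\det\rho(s)$ directly. Here the key structural fact is that $s$ has order $2$, so $\rho(s)$ is an involution of the representation space and is therefore diagonalizable with all eigenvalues in $\{+1,-1\}$. Writing $p$ and $q$ for the multiplicities of the eigenvalues $+1$ and $-1$ respectively, one reads off $\dim\rho = p+q$, $\chi_\rho(s) = \mathrm{tr}\,\rho(s) = p-q$, and $\det\rho(s) = (-1)^q$. Solving the first two relations gives $q = \tfrac{\dim\rho - \chi_\rho(s)}{2} = b$, whence $\det\rho(s) = (-1)^b$; in particular $b=q$ is automatically a non-negative integer.

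Comparing the two expressions for $\det\rho(s)$ yields $(-1)^c = (-1)^b$, i.e. $c \equiv b \pmod 2$, and since $\eps_W$ has order $2$ this gives $\det\rho = (\eps_W)^c = (\eps_W)^b$, as claimed. I do not expect any serious obstacle: the only point requiring a moment's care is that the formula must hold for \emph{every} $s \in S$, even though distinct generators of a general Coxeter group need not be conjugate, so the integer $b = b(s)$ could a priori vary with $s$. This causes no trouble, however, because the left-hand side $\det\rho$ does not depend on $s$; consequently the parity of $b(s)$ is forced to be independent of $s$, and the displayed identity holds uniformly.
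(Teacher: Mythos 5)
Your proof is correct and follows essentially the same route as the paper's: diagonalize the involution $\rho(s)$, read off the multiplicity of the eigenvalue $-1$ from $\dim\rho$ and $\chi_\rho(s)$, and use $|W_{\ab}|=2$ to conclude that $\det\rho$ is a power of $\eps_W$. You merely spell out the steps the paper leaves implicit (including the harmless observation that the parity of $b(s)$ is forced to be independent of the choice of $s$).
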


\begin{proof} Let $a$ be the multiplicity of $1$ as an eigenvalue of $\rho(s)$, and $b$ be the multiplicity of $-1$.  Then $\dim \rho=a+b$ and $\chi_\rho(s)=a-b$.
\end{proof}

We attribute this approach to counting chiral partitions to L. Solomon; see \cite[Exercise 7.55]{ec2}.

The abelianization of the other Coxeter groups ($B_n$, $F_4$, and $I_2(p)$ for $p$ even) are Klein $4$ groups.  For these, fix two non-conjugate simple reflections $s_1,s_2 \in S$, and multiplicative characters $\omega_1,\omega_2$ so that $\omega_1(s_1)=-1$, $\omega_1(s_2)=1$, $\omega_2(s_1)=1$, $\omega_2(s_2)=-1$.  Then $\eps_W=\omega_1 \cdot\omega_2$ and the multiplicative characters of $W$ are $\{ 1, \omega_1,\omega_2, \omega_1 \cdot\omega_2 \}$.  A similar proof to that of Proposition \ref{solomon.principle} gives:
\begin{proposition} 
  \label{solomon.principle2} 
  Suppose $|W_{\ab}|=4$, and let $s_1,s_2,\omega_1,\omega_2$ be as above.  If $\rho$ is a representation of $W$, then
  \begin{displaymath}
    \det \rho=(\omega_1)^{x_1}(\omega_2)^{x_2},
  \end{displaymath}
  where
  \begin{displaymath}
    x_1=\frac{\dim \rho-\chi_\rho(s_1)}{2},
  \end{displaymath}
  and
  \begin{displaymath}
    x_2=\frac{\dim \rho-\chi_\rho(s_2)}{2}.
  \end{displaymath}
\end{proposition}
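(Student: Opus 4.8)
The plan is to mimic the proof of Proposition~\ref{solomon.principle}, replacing the single generator $s$ by the two non-conjugate reflections $s_1,s_2$. First I would observe that $\det\rho\colon W\to\C^*$ is a multiplicative character, hence by hypothesis equals one of the four characters $1,\omega_1,\omega_2,\omega_1\cdot\omega_2$. The key preliminary point is that such a character is completely determined by its values on $s_1$ and $s_2$: evaluating the four listed characters at the pair $(s_1,s_2)$ yields the four distinct sign-pairs $(1,1)$, $(-1,1)$, $(1,-1)$, $(-1,-1)$, so the evaluation map from the character group to $\{\pm1\}^2$ is injective. It therefore suffices to match $\det\rho$ and $\omega_1^{x_1}\omega_2^{x_2}$ at $s_1$ and at $s_2$.

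Next, for each $i\in\{1,2\}$ I would apply the eigenvalue bookkeeping from Proposition~\ref{solomon.principle}. Since $s_i$ has order two, $\rho(s_i)$ is diagonalizable with eigenvalues $\pm1$; writing $a_i,b_i$ for the multiplicities of $+1$ and $-1$ respectively, one has $\dim\rho=a_i+b_i$ and $\chi_\rho(s_i)=a_i-b_i$, whence $x_i=\tfrac{\dim\rho-\chi_\rho(s_i)}{2}=b_i$. In particular $x_i$ is a nonnegative integer and $\det\rho(s_i)=(-1)^{b_i}=(-1)^{x_i}$.

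Finally I would verify the claimed identity on the two generators using the defining values $\omega_1(s_1)=-1$, $\omega_1(s_2)=1$, $\omega_2(s_1)=1$, $\omega_2(s_2)=-1$. On $s_1$ the right-hand side gives $\omega_1(s_1)^{x_1}\omega_2(s_1)^{x_2}=(-1)^{x_1}=\det\rho(s_1)$, and on $s_2$ it gives $\omega_1(s_2)^{x_1}\omega_2(s_2)^{x_2}=(-1)^{x_2}=\det\rho(s_2)$. Since both sides are multiplicative characters agreeing on $s_1$ and $s_2$, the injectivity noted above forces $\det\rho=\omega_1^{x_1}\omega_2^{x_2}$. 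Here one uses that $\omega_1,\omega_2$ each have order two, so the powers $\omega_i^{x_i}$ depend only on the parity of $x_i$ and the formula is well defined for the integer values $x_i=b_i$.

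I expect no genuine obstacle; the only step needing a word of justification is the claim that a multiplicative character is pinned down by its values at the two chosen reflections, which is exactly where the hypothesis $|W_{\ab}|=4$ together with the explicit list of characters is used.
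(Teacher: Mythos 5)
Your proof is correct and takes essentially the same approach as the paper, which simply remarks that ``a similar proof to that of Proposition~\ref{solomon.principle}'' applies: the eigenvalue-multiplicity computation at each $s_i$ gives $\det\rho(s_i)=(-1)^{x_i}$, and the hypothesis $|W_{\ab}|=4$ guarantees the character is pinned down by its values at $s_1$ and $s_2$. Your explicit verification of the injectivity of evaluation at the pair $(s_1,s_2)$ is a welcome detail the paper leaves implicit.
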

\subsection{Hyperoctahedral groups}
\label{sec:hyper.count}
Recall that $\eps$ denotes the character of ${\mb B}_n = S_2^n\rtimes S_n$ whose restriction to each factor $S_2$ is its non-trivial character.
Write $\sgn^0$ for the composition of the projection $\mb B_n \to S_n$ with the sign character of $S_n$.
Finally write $\sgn^1=\eps \cdot \sgn^0$.
The multiplicative characters of $\mb B_n$ are then $1, \eps, \sgn^0$, and $\sgn^1$.
For $\omega$ a multiplicative character of $\mb B_n$, put
\begin{displaymath}
  N_\omega(n)= \# \{ (\alpha,\beta) \models n \mid \det \rho_{\alpha,\beta}=\omega \}.
\end{displaymath}

In this section we will give closed formulas for $N_\omega(n)$ when $\omega \neq 1$.  (If we had a closed formula for $N_1(n)$, then we would have a closed formula for $p_2(n)$, which seems out of reach.)

The determinants of the representations of $\mb B_n$ can be computed either through the Solomon formulas, combined with the Frobenius character formula, or
through a well-known formula for the determinant of an induced representation.  (See \cite{Ghosh}, for example.)
Let
\begin{displaymath}
  x_{\alpha \beta}=f_\alpha f_\beta \binom{n-1}{a,b-1},
\end{displaymath}
and
\begin{displaymath}
  y_{\alpha \beta}=f_\alpha f_\beta \binom{n-2}{a-1,b-1}+ f_\beta g_\alpha \binom{n-2}{a-2,b}+ f_\alpha g_\beta \binom{n-2}{a,b-2}.
\end{displaymath}

\begin{theorem} \label{hyper.Solomon}
  For a bipartition $(\alpha,\beta)$, we have
  \begin{displaymath}
    \det \rho_{\alpha \beta}=\eps^{x_{\alpha \beta}} \cdot (\sgn^0)^{y_{\alpha \beta}}.
  \end{displaymath}
\end{theorem}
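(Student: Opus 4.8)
The plan is to deduce the formula directly from the Solomon principle (Proposition~\ref{solomon.principle2}), since for $n\geq 2$ the abelianization of $\mb B_n$ is a Klein four-group with the four multiplicative characters $1,\eps,\sgn^0,\sgn^1$; the case $n=1$, where $\mb B_1=S_2$ has abelianization of order $2$, is an immediate direct check. First I would choose the two non-conjugate simple reflections: let $s_1\in S_2^n$ be a sign change in a single coordinate, and let $s_2\in S_n$ be an adjacent transposition. Evaluating the linear characters gives $\eps(s_1)=-1$, $\eps(s_2)=1$, $\sgn^0(s_1)=1$, $\sgn^0(s_2)=-1$, so in the notation of Proposition~\ref{solomon.principle2} we may take $\omega_1=\eps$ and $\omega_2=\sgn^0$. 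Consequently
\[
\det\rho_{\alpha\beta}=\eps^{x_1}\,(\sgn^0)^{x_2},\qquad x_i=\frac{f_{\alpha\beta}-\chi_{\rho_{\alpha\beta}}(s_i)}{2},
\]
and the whole problem reduces to showing $x_1=x_{\alpha\beta}$ and $x_2=y_{\alpha\beta}$ (I in fact expect exact equalities, not merely congruences mod $2$).

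Both character values would be computed from the Frobenius formula for an induced character applied to \eqref{eq:3}, namely $\chi_{\rho_{\alpha\beta}}(g)=\frac{1}{|\mb B_a\times\mb B_b|}\sum_{t:\,t^{-1}gt\in \mb B_a\times\mb B_b}\chi_{\rho_\alpha^0\boxtimes\rho_\beta^1}(t^{-1}gt)$. For $s_1$ every conjugate is again a single-coordinate sign change, hence automatically lies in the base group $S_2^n\subset \mb B_a\times\mb B_b$; grouping the summands according to whether the affected coordinate lands in the $\mb B_a$-block or the $\mb B_b$-block, and using that $S_2^a$ acts trivially in $\rho_\alpha^0$ while each factor of $S_2^b$ acts by $-1$ in $\rho_\beta^1$, the two blocks contribute $+f_\alpha f_\beta$ and $-f_\alpha f_\beta$ respectively. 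A short count of the relevant $t$ then gives $\chi_{\rho_{\alpha\beta}}(s_1)=\frac{(n-1)!}{a!\,b!}(a-b)f_\alpha f_\beta$, and substituting $n-(a-b)=2b$ collapses $x_1$ to $f_\alpha f_\beta\binom{n-1}{a,b-1}=x_{\alpha\beta}$.

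For $s_2$ the conjugates are the positive $2$-cycles, and such an element lies in $\mb B_a\times\mb B_b$ exactly when its underlying transposition is contained in one of the two blocks $\{1,\dots,a\}$ or $\{a+1,\dots,n\}$. The key input for the character value is the classical fact that the normalized character of $S_m$ at a transposition equals $C(\gamma)/\binom m2$; combined with the formula for $g_\gamma$ in Theorem~\ref{theorem:chirality-gla} this yields $\chi_\gamma(\text{transposition})=f_\gamma-2g_\gamma$. One also checks that the possible sign twist on a $2$-cycle is invisible to $\eps$, so within the $\mb B_a$-block the summand is $(f_\alpha-2g_\alpha)f_\beta$ and within the $\mb B_b$-block it is $f_\alpha(f_\beta-2g_\beta)$. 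Counting the $t$ that place the transposition in each block, simplifying with the identity $n(n-1)-a(a-1)-b(b-1)=2ab$, and rewriting the three resulting terms as multinomial coefficients gives $x_2 = f_\alpha f_\beta\binom{n-2}{a-1,b-1}+f_\beta g_\alpha\binom{n-2}{a-2,b}+f_\alpha g_\beta\binom{n-2}{a,b-2}=y_{\alpha\beta}$, which completes the argument.

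The main obstacle, and the only genuinely delicate point, is the bookkeeping inside the wreath product: correctly describing the $\mb B_n$-conjugacy class of each chosen reflection (in particular recognizing that the conjugates of a transposition are the positive $2$-cycles, some carrying a sign twist), verifying that neither $\eps$ nor the factor $\rho_\beta^1$ distinguishes those twists when evaluating $\rho_\alpha^0\boxtimes\rho_\beta^1$, and counting the conjugating elements $t$ with the correct multiplicities. Everything downstream is elementary factorial algebra.
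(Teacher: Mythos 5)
Your proposal is correct and follows exactly the route the paper itself indicates for this theorem (the paper gives no written proof, only the remark that it follows from the Solomon formulas combined with the Frobenius character formula, deferring details to Ghosh's paper): you apply Proposition~\ref{solomon.principle2} with $\omega_1=\eps$, $\omega_2=\sgn^0$ and compute the two induced character values. Your identifications of the conjugacy classes, the invisibility of the sign twist on positive $2$-cycles, and the algebra yielding $x_1=x_{\alpha\beta}$ and $x_2=y_{\alpha\beta}$ all check out.
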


Since the parities of $f_\lambda$ and $g_\lambda$ can be read from the $2$-core tower of $\lambda$ by Theorems~\ref{theorem:oddness} and ~\ref{theorem:chiral-towers}, the parities of $x_{\alpha \beta}$ and $y_{\alpha \beta}$ are also determined by the same tower.  In particular, this allows for closed formulas for
\begin{displaymath}
  N_\omega(a,b)= \# \{ \alpha \vdash a, \beta \vdash b \mid \det \rho_{\alpha,\beta}=\omega \},
\end{displaymath}
at least when $\omega$ is nontrivial.
From this, the $N_\omega(n)$ are computed by the identity
\begin{displaymath}
  N_\omega(n)=\sum_{a+b=n} N_\omega(a,b).
\end{displaymath}
Here is the result:
For $n \geq 2$, and $k=v_2(n)$, we have
\begin{equation}
  \label{N_eps(n)}
  N_{\eps}(n)= \begin{cases}
    \frac{1}{4} a_2(n)   &\text{if $n$ is odd}, \\
    \frac{1}{8}a_2(n)  \left(2+\sum_{j=1}^{k-1}2^{\binom{k}{2}-\binom{j}{2}} \right)  & \text{if $n$ is even}.\\
  \end{cases}
\end{equation}

For $n$ as in (\ref{eq:1}), let $n'=n-\epsilon$.
The following equations compute $N_{\sgn^0}(n)$ in all cases~\cite{Ghosh}.  
Let $k=v_2(n')$.
\label{N_sgn0(n)}
\begin{enumerate}
\item If  $n \equiv 1 \mod 4$, then
  \begin{displaymath}
    N_{\sgn^0}(n) =\frac{1}{4} a_2(n) \left( 1+3 \cdot 2^{\binom{k}{2}-1}+2^{\binom{k}{2}-k+1}+ \sum_{j=2}^{k-1} \left(2^{\binom{k}{2}-\binom{j}{2}}+2^{\binom{k}{2}-j} \right)\right).
  \end{displaymath}
\item If  $n \equiv 3 \mod 4$, then   $ N_{\sgn^0}(n) =  \half a_2(n)$.
\item If  $n$ is even, then
  \begin{displaymath}
    N_{\sgn^0}(n) =   \frac{1}{8}a_2(n)  \left(2+\sum_{j=1}^{k-1}2^{\binom{k}{2}-\binom{j}{2}} \right).
  \end{displaymath}
\end{enumerate}
For $n \geq 2$, and $k=v_2(n)$, we have
\begin{equation*}
  N_{\sgn^1}(n)= \begin{cases}
    \frac{1}{4}a_2(n) &\text{if } n \text{ is odd} \\
    \frac{1}{8}a_2(n) \left( 2+2^k+ \sum_{j=1}^{k-1} 2^{\binom{k}{2}-\binom{j}{2}}(2^j-1) \right) & \text{if $n$ is even}.\\
  \end{cases}
\end{equation*}
Since $N_{1}(n)+ N_{\eps}(n)+N_{\sgn^0}(n)+N_{\sgn^1}(n)=p_2(n)$,
one implicitly has a formula for $N_1(n)$ as well.
Figure~\ref{fig:Bn-plot} shows a logplot of each $N_\omega(n)$, with $N_1(n)$ in orange,  $N_{\sgn^0}(n)$ in green, $N_{\sgn^1}(n)$ in red, and $N_{\eps}(n)$ in blue.
Figure~\ref{fig:Bn-plot} suggests the following inequalities, which have been shown to hold for $n \geq 10$:
\begin{enumerate}
\item  $N_{\eps}(n)=N_{\sgn^1}(n)<N_{\sgn^0}(n)<N_{1}(n)$, for $n$ odd
\item $N_{\eps}(n)=N_{\sgn^0}(n)<N_{\sgn^1}(n)<N_{1}(n)$, for $n$ even.
\end{enumerate}
\begin{center} 
  \begin{figure}
    \includegraphics[width=\textwidth]{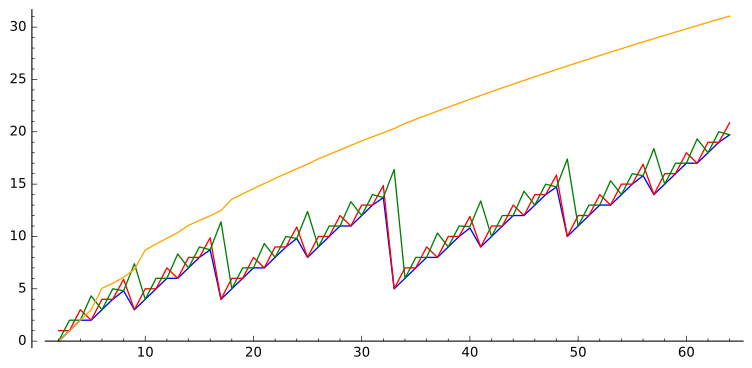}
    \caption{$N_1(n)$ (orange), $N_{\sgn^0}(n)$ (green), $N_{\sgn^1}(n)$ (red) and $N_{\eps}(n)$ (blue).}
    \label{fig:Bn-plot}
  \end{figure}
\end{center}
\subsection{Demihyperoctahedral groups}
\label{demi.count}

The group $\mb D_n$, for $n \geq 2$, has two multiplicative characters: $1$ and $\sgn$, where $\sgn$ is the restriction of $\sgn^0$ (or of $\sgn^1$) to $\mb D_n$.  To avoid confusion with earlier notation, let us write
\begin{displaymath}
  N_\omega'(n)= \# \{ \rho \in \Irr(\mb D_n) \mid \det \rho=\omega \},
\end{displaymath}
for $\omega=1,\sgn$.
Since the sum of $N_1'(n)$ and $N_{\sgn}'(n)$ is equal to this number, so we need only count $N_{\sgn}'(n)$. 

The restrictions of $\rho_{\alpha\beta}$ with $\alpha \neq \beta$ to $\mb D_n$ are irreducible, and their determinants are given by restriction.
On the other hand, the restrictions of $\rho_{\alpha\alpha}$ split into two irreducible pieces $\rho_{\alpha\alpha}^{\pm}$  with identical determinants.  To compute these determinants, we apply the Solomon formula, using that
$\chi_{\rho_{\alpha\alpha}^+}(s_1)=\half \chi_{\rho_{\alpha\alpha}}(s_1)$.  Unless $n=2^k$ or $n=2^k+2$ for some $k$, one gets the trivial character.  If $n$ is a power of $2$, then $\rho_{\alpha\alpha}^{\pm}$ has determinant $\sgn$ if and only if $\alpha$ is odd and achiral.  If $n=2^k+2$ for some $k$, then $\rho_{\alpha\alpha}^{\pm}$ has determinant $\sgn$ if and only if $\alpha$ is odd.  This leads to the following count:
\begin{theorem} 
  Let $n \geq 4$.
  We have
  \begin{displaymath}
    N_{\sgn}'(n)=
    \left\{
      \begin{array}{ll}
        \half( N_{\sgn^0}(n)+N_{\sgn^1}(n)) +\half  n&\text{ if $n=2^k$ for some $k\geq1$,}\\
        \half( N_{\sgn^0}(n)+N_{\sgn^1}(n))+n-2 & \text{ if $n=2^k+2$ for some $k\geq 1$},\\
        \half( N_{\sgn^0}(n)+N_{\sgn^1}(n)) & \text{ otherwise.}
      \end{array}
    \right.
  \end{displaymath}
\end{theorem}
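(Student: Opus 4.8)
The plan is to partition the set $\Irr(\mb D_n)$ into the two families described before the statement and count the determinant-$\sgn$ representations in each: (A) the irreducibles $\Res^{\mb B_n}_{\mb D_n}\rho_{\alpha\beta}$ with $\alpha\neq\beta$, and (B) the split pieces $\rho^\pm_{\alpha\alpha}$ (present only when $n$ is even). For family (A), since the determinant commutes with restriction and $\det\rho_{\alpha\beta}=\eps^{x_{\alpha\beta}}(\sgn^0)^{y_{\alpha\beta}}$ by Theorem~\ref{hyper.Solomon}, while $\eps|_{\mb D_n}=1$ (as $\mb D_n=\ker\eps$) and $\sgn^0|_{\mb D_n}=\sgn$, the restricted determinant is simply $\sgn^{y_{\alpha\beta}}$; it equals $\sgn$ exactly when $y_{\alpha\beta}$ is odd. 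The two nontrivial hyperoctahedral characters $\sgn^0=\eps^{0}(\sgn^0)^1$ and $\sgn^1=\eps^1(\sgn^0)^1$ are precisely the ones with $y_{\alpha\beta}$ odd (split according to the parity of $x_{\alpha\beta}$), so the number of \emph{ordered} bipartitions with $y_{\alpha\beta}$ odd is exactly $N_{\sgn^0}(n)+N_{\sgn^1}(n)$.

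Next I pass from ordered to unordered pairs. Since $y_{\alpha\beta}=y_{\beta\alpha}$, each unordered pair $\{\alpha,\beta\}$ with $\alpha\neq\beta$ yields one irreducible of $\mb D_n$ and is counted twice above, so (A) contributes $\half(N_{\sgn^0}(n)+N_{\sgn^1}(n))$ provided no diagonal $(\alpha,\alpha)$ has $y_{\alpha\alpha}$ odd. For $n$ odd there is no diagonal. For $n$ even with $a=n/2$, reducing the formula modulo $2$ gives $y_{\alpha\alpha}\equiv f_\alpha\binom{2a-2}{a-1}\pmod 2$, and the central binomial $\binom{2(a-1)}{a-1}$ is even once $a-1\geq1$, i.e.\ once $n\geq4$. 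Hence for $n\geq4$ the diagonal never contributes, and (A) produces $\half(N_{\sgn^0}(n)+N_{\sgn^1}(n))$, the main term common to all three cases.

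For family (B), with $n$ even and $\alpha$ a partition of $a=n/2$, I apply the Solomon principle (Proposition~\ref{solomon.principle}, valid since $|(\mb D_n)_{\ab}|=2$) to a single piece $\rho^+_{\alpha\alpha}$. Using $\dim\rho^+_{\alpha\alpha}=\half f_{\alpha\alpha}$ and the stated identity $\chi_{\rho^+_{\alpha\alpha}}(s_1)=\half\chi_{\rho_{\alpha\alpha}}(s_1)$ for a simple transposition $s_1\in S_n\subset\mb D_n$, the Solomon exponent is $b=\tfrac14\big(f_{\alpha\alpha}-\chi_{\rho_{\alpha\alpha}}(s_1)\big)=\half x_1$, where $x_1=\half(f_{\alpha\alpha}-\chi_{\rho_{\alpha\alpha}}(s_1))$ is the multiplicity of the eigenvalue $-1$ of $\rho_{\alpha\alpha}(s_1)$; each $\alpha$ with $b$ odd contributes $2$ (both of $\rho^\pm_{\alpha\alpha}$) to $N'_{\sgn}(n)$. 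The main obstacle lies exactly here: $\det\rho^+_{\alpha\alpha}$ is governed by $b\bmod 2$, that is by $x_1\bmod 4$, which is strictly finer than the parity $x_1\equiv y_{\alpha\alpha}\pmod 2$ encoded by the formula for $y_{\alpha\alpha}$ (indeed $y_{\alpha\alpha}$ is \emph{even} by the previous paragraph, so it carries no information about $b$). I therefore expect to recompute $\chi_{\rho_{\alpha\alpha}}(s_1)$ to higher $2$-adic precision, via the induced-character formula for $\rho_{\alpha\alpha}=\ind_{\mb B_a\times\mb B_a}^{\mb B_n}(\rho^0_\alpha\boxtimes\rho^1_\alpha)$, and extract $x_1\bmod4$ in terms of $f_\alpha$, $g_\alpha$ and central binomial coefficients. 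This is the delicate step, and it is what singles out the sizes $n=2^k$ and $n=2^k+2$; the outcome is that $b$ is even (trivial determinant) otherwise, while $b$ is odd precisely when $\alpha$ is odd and achiral (for $n=2^k$) or when $\alpha$ is odd (for $n=2^k+2$).

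Granting that analysis, the counts in family (B) follow from Macdonald's formula (Theorem~\ref{thm:macdonald}) and the fact, noted after Theorem~\ref{theorem:chiral-towers}, that exactly half of the odd partitions of a given size are chiral. For $n=2^k$ with $k\geq2$, the number of odd achiral partitions of $n/2=2^{k-1}$ is $\half a(2^{k-1})=2^{k-2}$, contributing $2\cdot2^{k-2}=\half n$. For $n=2^k+2$ with $k\geq2$ (which is never a power of $2$), the number of odd partitions of $n/2=2^{k-1}+1$ is $a(2^{k-1}+1)=2^{k-1}$, contributing $2\cdot2^{k-1}=n-2$. In the remaining even sizes (B) vanishes, matching the ``otherwise'' branch. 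The sole boundary case is $n=4$, which lies in both families; the power-of-$2$ characterisation (``odd and achiral'') takes precedence and gives $\half n=2$, while $n-2=2$ coincides numerically, so the displayed formula is consistent. Summing the contributions of (A) and (B) yields the three stated cases.
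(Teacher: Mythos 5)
Your outline coincides with the paper's: split $\Irr(\mb D_n)$ into the restrictions $\Res^{\mb B_n}_{\mb D_n}\rho_{\alpha\beta}$ ($\alpha\neq\beta$) and the split pieces $\rho^\pm_{\alpha\alpha}$, get the main term by halving $N_{\sgn^0}(n)+N_{\sgn^1}(n)$, and handle the diagonal via the Solomon principle. Your check that no diagonal bipartition has $y_{\alpha\alpha}$ odd for $n\geq 4$ (so that the halving is clean) is a point the paper leaves implicit, and your final counts via Macdonald's formula and the half-chirality of odd partitions are right. The one step you explicitly defer --- determining $b=\tfrac{1}{2}x_1 \bmod 2$, i.e.\ $x_1 \bmod 4$ --- is exactly the step the paper also only asserts, so you have correctly located the crux; but as written, ``granting that analysis'' leaves the heart of the theorem (why $n=2^k$ and $n=2^k+2$ are the exceptional sizes, and what they contribute) unproved.

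That gap closes more easily than your write-up suggests, because no higher-precision character computation is needed: the quantity $y_{\alpha\beta}$ of Theorem~\ref{hyper.Solomon} is not merely a parity representative but the exact Solomon exponent $\tfrac{1}{2}\bigl(\dim\rho_{\alpha\beta}-\chi_{\rho_{\alpha\beta}}(s_2)\bigr)$ for a transposition $s_2$. (This follows from the induced character formula for $\rho_{\alpha\beta}$ together with $\chi_\alpha(s_2)=f_\alpha-2g_\alpha$ and $\binom{n}{a}=\binom{n-2}{a-2}+2\binom{n-2}{a-1}+\binom{n-2}{a}$.) Hence, with $a=n/2$, the exponent for $\rho^+_{\alpha\alpha}$ is exactly $b=\tfrac{1}{2}y_{\alpha\alpha}=f_\alpha^2\binom{2a-3}{a-2}+f_\alpha g_\alpha\binom{2a-2}{a-2}$, and Kummer's theorem gives the dichotomy you anticipated: $\binom{2a-3}{a-2}$ is odd precisely when $a-1$ is a power of $2$ (i.e.\ $n=2^k+2$), and $\binom{2a-2}{a-2}$ is odd precisely when $a$ is a power of $2$ (i.e.\ $n=2^k$). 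For all other even $n$ both terms are even and the diagonal contributes nothing; for $n=2^k+2$ with $k\geq 2$ one gets $b\equiv f_\alpha$, so $a(n/2)=(n-2)/2$ partitions contribute $n-2$ representations; for $n=2^k$ with $k\geq 3$ one gets $b\equiv f_\alpha g_\alpha$, so $\tfrac{1}{2}a(n/2)=n/4$ partitions contribute $\tfrac{1}{2}n$ (the count is the same whether the condition reads ``odd and chiral'' or ``odd and achiral,'' since exactly half the odd partitions are chiral); and $n=4$ is checked directly, as you note. With that computation inserted, your proof is complete and agrees with the paper's.
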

Formulas for $N_\omega(n)$ were given in Section~\ref{sec:hyper.count}.
\subsection{Other Coxeter groups}
The remaining Coxeter groups are either dihedral or exceptional.  The dihedral case is facile, and the exceptional cases are finite in number and the determinants may
be computed by available character tables.  See \cite{Ghosh} for these remaining cases.
\section{Comparison of odd and chiral partitions}
\label{sec:comparison}
It turns out that the functions $a(n)$ and $b(n)$ track each other closely (see Figure~\ref{fig:growth}).
\begin{figure}[h]
  \centering
  \includegraphics[scale=0.6]{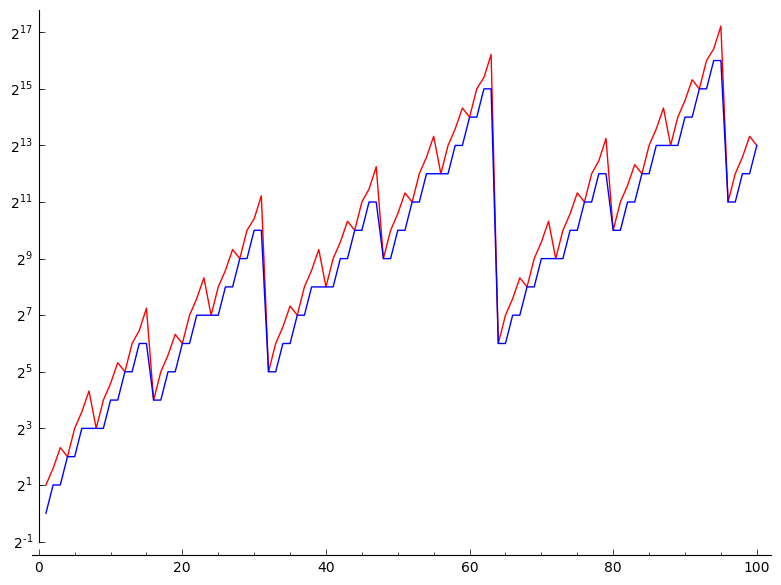}
  \caption{Growth of \textcolor{blue}{$a(n)$} (odd partitions) against \textcolor{red}{$b(n+2)$} (chiral partitions)  on a log-linear scale.}
  \label{fig:growth}
\end{figure}
\begin{theorem}
  \label{theorem-an-bn+2-comparison}
  For every positive integer $n$,
  \begin{displaymath}
    \frac{2}{5} \leq \frac{a(n)}{b(n+2)} \leq 1.
  \end{displaymath}
  Moreover, $a(n)/b(n+2) = 1$ if and only if $n$ is divisible by $4$.
\end{theorem}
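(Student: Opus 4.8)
The plan is to play the two closed formulas against each other: Macdonald's formula (Theorem~\ref{thm:macdonald}) for $a(n)$ and the formula of Theorem~\ref{theorem:all} for $b(n)$, reducing everything to a comparison of the binary expansions of $n$ and $n+2$. It is convenient to first restate Macdonald's formula as $a(n)=2^{\sigma(n)}$, where $\sigma(n)$ denotes the sum of the positions of the $1$-bits in the binary expansion of $n$ (the units bit contributes $0$, so this agrees with Theorem~\ref{thm:macdonald}). Writing $b(n)=a(n)\,f(k_1,\epsilon)$ with
\[
  f(k_1,\epsilon)=\half+\sum_{v=1}^{k_1-1}2^{vk_1-\frac{v^2+3v+4}{2}}+\epsilon\cdot 2^{\binom{k_1}{2}-k_1},
\]
the quantity to be bounded becomes
\[
  \frac{a(n)}{b(n+2)}=\frac{a(n)}{a(n+2)}\cdot\frac{1}{f(k_1',\epsilon')}=2^{\sigma(n)-\sigma(n+2)}\cdot\frac{1}{f(k_1',\epsilon')},
\]
where $k_1',\epsilon'$ are the low-order parameters of $n+2$. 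Thus the whole problem is governed by how adding $2$ alters the low bits, and I would organize the argument as a case split on $n\bmod 4$.

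First I would dispose of the two carry-free residues. If $n\equiv0\pmod 4$ then adding $2$ merely sets bit $1$, so $\sigma(n+2)=\sigma(n)+1$, while $n+2\equiv 2\pmod4$ has $k_1'=1,\ \epsilon'=0$; since $f(1,0)=\half$ the ratio equals $1$. If $n\equiv1\pmod4$ the same bit-setting gives $\sigma(n+2)=\sigma(n)+1$, while $n+2\equiv3\pmod4$ has $k_1'=1,\ \epsilon'=1$ and $f(1,1)=1$, so the ratio equals $\half$. Hence $n\equiv0$ produces the value $1$ and $n\equiv1$ produces $\half$.

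The residues $n\equiv2,3\pmod4$ are where a carry occurs, and this is the technical core. In both cases let $t\ge1$ be the length of the maximal run of $1$-bits beginning at position $1$, so bits $1,\dots,t$ are $1$ and bit $t+1$ is $0$; put $K=t+1\ge2$. Adding $2$ clears bits $1,\dots,t$, sets bit $t+1$, and leaves bit $0$ unchanged, so in both cases $k_1'=K$ and $\sigma(n)-\sigma(n+2)=\tfrac{t(t+1)}{2}-(t+1)=\tfrac{K(K-3)}{2}$, giving $2^{\sigma(n)-\sigma(n+2)}=N:=2^{K(K-3)/2}$. For $n\equiv2$ one has $\epsilon'=0$; for $n\equiv3$ one has $\epsilon'=1$, and since $\binom{K}{2}-K=\tfrac{K(K-3)}{2}$ the extra term is exactly $N$, so $f(K,1)=f(K,0)+N$. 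Setting $R(K)=N/f(K,0)$, the ratio is $R(K)$ when $n\equiv2$ and $R(K)/(1+R(K))$ when $n\equiv3$.

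It remains to prove the uniform bounds $\tfrac23\le R(K)<1$ for all $K\ge2$, which I expect to be the main obstacle. The substitution $u=K-v$ collapses the defining sum into
\[
  \frac{1}{R(K)}=\frac{f(K,0)}{N}=\frac{1}{2N}+\sum_{u=1}^{K-1}2^{-(u^2-3u+4)/2},
\]
from which a short computation yields the clean increment
\[
  \frac{1}{R(K+1)}-\frac{1}{R(K)}=2^{-K(K-3)/2}\bigl(2^{-K}-\tfrac14\bigr),
\]
which is $\le0$ for every $K\ge2$, with equality only at $K=2$. Hence $1/R(K)$ is non-increasing and is maximized at $K\in\{2,3\}$, where it equals $\tfrac32$; this gives $R(K)\ge\tfrac23$. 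The strict bound $R(K)<1$ follows because the $u=1$ term (and, for $K\ge3$, the $u=2$ term as well) already forces $1/R(K)>1$. Since $x\mapsto x/(1+x)$ is increasing, the residue $n\equiv2$ contributes ratios in $[\tfrac23,1)$ and $n\equiv3$ contributes ratios in $[\tfrac25,\tfrac12)$, the minimum $\tfrac25$ being attained precisely at $K\in\{2,3\}$. Collating the four residues, every value of $a(n)/b(n+2)$ lies in $[\tfrac25,1]$, and the value $1$ is attained exactly when $n\equiv0\pmod4$, which proves the theorem.
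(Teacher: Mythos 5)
Your proposal is correct; I verified the case analysis modulo $4$, the carry computation $\sigma(n)-\sigma(n+2)=\tfrac{K(K-3)}{2}$ with $k_1'=K$, the reindexed sum $\tfrac{1}{R(K)}=\tfrac{1}{2N}+\sum_{u=1}^{K-1}2^{-(u^2-3u+4)/2}$, the increment $\tfrac{1}{N}\bigl(2^{-K}-\tfrac14\bigr)$, and the extreme values $R(2)=R(3)=\tfrac23$ (e.g.\ $a(3)/b(5)=2/5$, $a(2)/b(4)=2/3$, $a(6)/b(8)=2/3$, $a(14)/b(16)=8/11$ all agree). The survey itself defers the proof to the cited reference, but your argument is exactly the expected one: play Theorem~\ref{thm:macdonald} against Theorem~\ref{theorem:all} and track how adding $2$ perturbs the low-order bits, so this is essentially the same approach rather than a genuinely different route.
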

See \cite{APS-chiral} for the proof.
Theorem \ref{theorem-an-bn+2-comparison} says that $a(n)$ is a good proxy for estimating the growth of $b(n)$.
The order of the sequence $a(n)$ fluctuates widely; when $n=2^k$, $a(n) = 2^k$ and when $n=2^k-1$, $a(n) = 2^{k(k-1)/2}$.
In any case, $a(n)$ (and thus also $b(n)$) is dwarfed by the growth of the partition function:
\begin{displaymath}
  p(n) \sim \frac 1{4n\sqrt 3} \exp(\pi \sqrt{2n/3}) \text{ as } n\to \infty.
\end{displaymath}
For example, Theorem~\ref{theorem:all} predicts a relatively large value for $b(4097)$ (compared to neighboring integers), but even so, the probability of a partition of $4097$ being chiral is
\begin{displaymath}
  b(4097)/p(4097) \approx 2.4236148775415833\times 10^{-47},
\end{displaymath}
which is astronomically small.
Therefore, algorithms for sampling uniformly random partitions will be very ineffective for sampling uniformly random odd and chiral partitions.
But using Theorem~\ref{theorem:odd_rec} and Theorem~\ref{theorem:chiral-towers}, one may easily generate these of size $4097$ very fast.
A sample run of our code, which is available at \url{http://www.imsc.res.in/\~amri/chiral.sage},
gives random odd and chiral partitions of $4095$ and $4097$ respectively instantaneously on a conventional office desktop:
\begin{lstlisting}
  sage: random_odd_dim_partition(4095).frobenius_coordinates()
  ([677, 491, 148, 65, 24, 6, 2, 1, 0], 
  [1556, 446, 346, 206, 107, 7, 3, 1, 0])

  sage: random_chiral_partition(4097).frobenius_coordinates()
  ([1879, 272, 152, 27, 20, 19, 8, 2, 0], 
  [1015, 239, 168, 103, 100, 43, 32, 7, 2])
\end{lstlisting}
These programs also allow us to enumerate odd and chiral partitions efficiently.
Our code for chiral partitions based on Theorem~\ref{theorem:chiral-towers} also provides algorithms for generating random chiral partitions with dimension having fixed $2$-adic valuation, and for enumerating all chiral partitions of $n$ with dimension having fixed $2$-adic valuation.
\section*{Acknowledgements}
We thank A. R. Miller for pointing us to McKay's work.
We thank Debarun Ghosh for providing us with Figure~\ref{fig:Bn-plot}.
This research was driven by computer exploration using the open-source
mathematical software \texttt{Sage}~\cite{sage} and its algebraic
combinatorics features developed by the \texttt{Sage\-Combinat}
community~\cite{Sage-Combinat}. AA was partially supported by UGC centre for Advanced Study grant and by Department of Science and Technology grant EMR/2016/006624. AP was partially supported by a Swarnajayanti Fellowship of the Department of Science and Technology (India).
\bibliographystyle{plain}
\bibliography{refs}
\end{document}